\NeedsTeXFormat{LaTeX2e} 

\documentclass{amsart}
\usepackage{amsfonts}
\usepackage{amsmath,amsthm}
\usepackage{amsfonts,amssymb}
\usepackage{CJK}
\usepackage{enumerate}

\hfuzz1pc 



\newtheorem{thm}{Theorem}[section]
\newtheorem{cor}[thm]{Corollary}
\newtheorem{lem}[thm]{Lemma}

\theoremstyle{remark}
\newtheorem{rem}[thm]{Remark}

\numberwithin{equation}{section}

\newcommand{\al}{\alpha}

\def \b{\beta}

\def\oz{\omega}
\def\lz{\lambda}
\def\Lz{\Lambda}

\def\az{\alpha}

\def\({\Bigl(}
\def \){ \Bigr)}

 \def\az{{\alpha}}

 \def\lz{{\lambda}}
 
 \def\oz{{\omega}}

 \def\RR{{\mathbb R}}

\def\a{{\mathbf a}}
 \def\b{{\mathbf b}}

\def\Lz{\Lambda}

\def\Lz{\Lambda}

\begin{document}
\def\RR{\mathbb{R}}
\def\Exp{\text{Exp}}
\def\FF{\mathcal{F}_\al}

\title[] {Weighted $L_p$ Markov factors with doubling weights on the ball}

\author[]{Jiansong Li} \address{ School of Mathematical Sciences, Capital Normal
University, Beijing 100048,
 China}
\email{2210501007@cnu.edu.cn}

\author[]{ Heping Wang} \address{ School of Mathematical Sciences, Capital Normal
University, Beijing 100048,
 China}
\email{ wanghp@cnu.edu.cn}

\author[]{ Kai Wang} \address{School of Mathematics and Information Sciences, Langfang Teachers University, Langfang 065000, China}
\email{cnuwangk@163.com}

\keywords{Worst case Markov factors;  Random polynomials; Average
case Markov factors; Doubling weights; Jacobi weights; Ball}

\subjclass[2010]{26D05, 42A05}

\begin{abstract} Let $L_{p,w},\ 1 \le p<\infty,$ denote the weighted $L_p$ space
of functions on the unit ball $\Bbb B^d$ with  a doubling weight
$w$ on $\Bbb B^d$. The Markov factor for $L_{p,w}$  on a
polynomial $P$ is defined by $\frac{\|\, |\nabla
P|\,\|_{p,w}}{\|P\|_{p,w}}$, where $\nabla P$ is the gradient of
$P$. We investigate the worst case Markov factors for $L_{p,w}\
(1\le p<\infty)$ and obtain that the degree of these factors are
at most $2$. In particular, for the Jacobi weight
$w_\mu(x)=(1-|x|^2)^{\mu-1/2}, \ \mu\ge0$, the exponent $2$ is
sharp. We also study the average case  Markov factor for $L_{2,w}$
on random polynomials with independent $N(0, \sigma^2)$
coefficients and obtain that the upper bound of the average
(expected) Markov factor is order degree to the $3/2$, as compared
to the degree squared worst case upper bound.
\end{abstract}

\maketitle
\input amssym.def

\section{Introduction }

\ Let $\mathbb{B}^{d}=\{x\in \mathbb{R}^d:\,|x|\leq 1\}$ denote
the unit ball of $\mathbb{R}^d$ and $\mathbb{S}^{d}=\{x\in
\mathbb{R}^{d+1}:\, |x|= 1\}$ denote the unit sphere of
$\mathbb{R}^{d+1}$, where $x\cdot y$ is the usual inner product
and $|x|=(x\cdot x)^{1/2}$ is the usual Euclidean norm.  Given a weight function $w$, we
denote by $L_{p,w}:= L_{p}(\Bbb B^d, w(x)dx)\ (1\le p\le\infty)$
the Lesbegue space on $\Bbb B^d$ endowed with the norm
$$\|f\|_{p,w}:=\left\{
\begin{aligned}&\Big(\int_{\mathbb{B}^{d}}|f(x)|^pw(x)dx\Big)^{1/p},\ \ &1\leq
p<\infty,
\\
&{\rm ess}\sup\limits_{x\in\mathbb{B}^d}|f(x)|,\ \ \
&p=\infty,\end{aligned}\right.$$ and we set, for a measurable
subset $\textbf{E}$ of $\Bbb B^d$,
$$|\textbf E|:=\int_{\textbf{E}}dx \ \ {\rm and}\ \ w(\textbf{E}):=\int_{\textbf{E}}w(x)dx.$$
For simplicity, we write $L_p$ and $\|\cdot\|_p$ for $L_{p,w}$ and
$\|\cdot\|_{p,w}$ when $w(x)=1$.

We introduce a distance on $\Bbb B^d$:
$$d(x,y):=\arccos\{x\cdot y+ \sqrt{1-|x|^2}\sqrt{1-|y|^2}\},\ \ x,y\in\mathbb{B}^d,$$
and write
$$ \mathbf{B}(x,r):=\{y\in \Bbb B^d:\, d(x,y)\le r\}.$$
It also can be seen that the distance $d$ is
equivalent to the distance $\tilde{d}$ on $\Bbb B^d$, which is
defined by
$$\tilde{d}(x,y):=\sqrt{|x-y|^2+(\sqrt{1-|x|^2}-\sqrt{1-|y|^2})^2},\
\ x,y\in\mathbb{B}^d.$$We also  define the
geodesic distance on $\mathbb{S}^{d}$ by$$\rho(x,y):=
\arccos(x\cdot y),\ \ x,y\in\mathbb{S}^{d}.$$

A weight function $w$ is a doubling weight if there exists a constant
$L>0$, depending only on $d$ and $w$, such that
$$w(\mathbf{B}(x,2r))\leq L\, w(\mathbf{B}(x,r)),\ {\rm\ for\ all}\ x\in\mathbb{B}^{d},\ r>0\ ,$$
where the least constant $L_w$ is called the doubling constant of
$w$. In particular, the Jacobi weight
$w_\mu(x)=(1-|x|^2)^{\mu-1/2}\ (\mu \geq 0)$ is the doubling
weight on $\mathbb{B}^{d}$. This follows from the fact  (see
\cite[Lemma 5.3]{PX}) that for $r>0$, $x\in \Bbb B^d$,
\begin{equation}\label{1.1}
  w_\mu(\textbf{B}(x,r)):=\int_{\textbf{B}(x,r)}w_\mu(y)dy\asymp r^d(r+\sqrt{1-|x|^2})^{2\mu}.
\end{equation}Here we use the notion $A_n\asymp B_n$ to express
$A_n\ll B_n$ and $A_n\gg B_n$, and $A_n\ll B_n$ ($A_n\gg B_n$)
means that there exists that a constant $c>0$ independent of $n$
such that $A_n\leq cB_n$ ($A_n\geq cB_n$).
According to \eqref{1.1}, we also have for a positive integer $n$,
\begin{equation}\label{1.2}
  w_\mu(\textbf{B}(x,1/n))\asymp n^{-d}\mathcal{W}_\mu(n;x):=n^{-d}(\sqrt{1-|x|^2}+1/n)^{2\mu}.
\end{equation}

Let $\Pi_n^{d}$ be the space of all polynomials in $d$ variables
of total degrees at most $n$. For $\Omega\subset \Bbb R^d$, we use
$\Pi_n(\Omega)$ to denote the restriction to $\Omega$ of $\Pi_n^d$.

In this paper we study  Markov factors. Suppose that $X$ is a
Banach space consisting of  functions defined on a domain of $\Bbb
R^d$ with norm $\|\cdot\|_X$. The Markov factor for $X$ on a
polynomial $P$ is defined by
$$\frac{\|\, |\nabla P|\,\|_{X}}{\|P\|_X},$$ where
$\nabla=(\partial_1,\dots,
\partial_d):=(\frac{\partial }{\partial x_1},\dots, \frac{\partial
}{\partial x_d})$ is the gradient operator. The worst case Markov
factor for $X$  is defined by
$$M_n^{\rm wor}(X):=\sup_{0\neq P\in\Pi_n^{d}}\frac{\|\,|\nabla P|\,\|_X}{\|P\|_X}.$$

If  $X$ is a Hilbert space, then we can discuss the average case
Markov factor first introduced  in \cite{B}. Let $\mathcal{B}_n
:=\{P_1,P_2,\dots,P_{N_n^d}\}$ be an orthonormal basis for
$\Pi_n^d$ in  $X$, where
 $N_n^d$ denotes the dimension of $\Pi_n^d$. A random polynomial in $\Pi_n^d$ is defined by
$$P_{\a}(x)=\sum_{j=1}^{N_n^d}a_jP_j(x),$$ where ${\a}=(a_1,a_2,\dots,a_{N_n^d})^T\in \Bbb R^{N_n^d}$, the coefficients
$a_j$ are independent $N(0,\sigma^2)$ random variables with the
common normal density functions $$\frac
1{\sqrt{2\pi}\sigma}e^{-\frac {a_j^2}{2\sigma^2}},\ \ 1\le j\le
N_n^d.$$

The average case Markov factor for $X$ is defined by
\begin{align*}
M_n^{\rm ave}(X)&:=\Bbb E\Big(\frac{\|\,|\nabla P_{\bf
a}|\,\|_{X}}{\|P_{\bf a}\|_{X}}\Big)\\ &:=\frac1{(2\pi
\sigma^2)^{N_n^d/2}}\int_{\Bbb R^{N_n^d}}\frac{\| \, |\nabla
P_{\a}|\,\|_{X}}{\|P_{\a}\|_{X}}\, {e^{-\frac{\|{\bf
a}\|_2^2}{2\sigma^2}}}d{\a},
\end{align*}
where $$\|\a\|_2^2:=\sum_{j=1}^{N_n^d}|a_j|^2,\ \ \ d{\a}:=da_1da_2\dots da_{N_n^d}.$$

This paper is devoted to investigating the worst case Markov
factors for $L_{p,w},\ 1\le p<\infty$ and  the average  case
Markov factors for $L_{2,w}$, where $w$ is a doubling weight on
$\Bbb B^d$. We recall previous results about the worst and average
case Markov factors.

For $X = L_{p}([-1,1]),\ 1\le p\le\infty$, the classical Markov
inequality states that
\begin{equation}\label{1.3}
  M_n^{\rm wor}(L_{p}([-1,1])):=\sup_{0\neq P\in \Pi_n^1}\frac{\|P'\|_p}{\|P\|_p}\asymp n^2.
\end{equation}
In fact, \eqref{1.3} was proved by A. A. Markov for $p=\infty$ in
\cite{M} and by Hille, Szeg\"o and Tamarkin for $1\le p<\infty$ in
\cite{HST}.

For  $X = L_{p}({\bf K},dx),\ 1\le p\le \infty$ with convex body
${\bf K}$ in $\Bbb R^d$, Wilhelmsen obtained in \cite{Wi} for
$p=\infty$, and Kro\'o and S. R\'ev\'esz in \cite{KS} for $1\le
p<\infty$ that
$$M_n^{\rm wor}(L_p({\bf K},dx))\ll n^2.$$

Suppose that $1\le p< \infty$,
$w_{\alpha,\beta}(x)=(1-x)^\alpha(1+x)^\beta$ is the Jacobi weight
on $[-1,1]$, and  $\az,\beta>-1$. It is proved   that (see
\cite{G})
 \begin{equation}\label{1.4}
 M_n^{\rm wor}(L_{p}([-1,1],w_{\az,\beta}(x)dx)\asymp n^2,
 \end{equation}
and (see \cite{B}) \begin{equation*}M_n^{\rm
ave}(L_2([-1,1],w_{\alpha,\beta}(x)dx)) \asymp
n^{3/2}.\end{equation*} This means that  the worst and average
case Markov factors for $L_2([-1,1],w_{\alpha,\beta}(x)dx)$ are
different.

For $X = L_{p}([-1,1], w(x)dx)\,(1\le p<\infty)$ with doubling
weight $w$ on $[-1,1]$,  Mastroianni and Totik  proved in
\cite[Theorem 7.4]{MT} that
\begin{equation}
  M_n^{\rm wor}(L_{p}([-1,1], w(x)dx))\ll n^2.\label{1.5}
\end{equation} Wang, Ye and Zhai obtained in \cite{WYZ} that
\begin{equation}\label{1.6}
  n^{-1/2}M_n^{\rm wor}(L_2([-1,1],w(x)dx))\ll M_n^{\rm ave}(L_2([-1,1],w(x)dx))\ll n^{3/2}.
\end{equation}
In general, for a doubling weight $w$ on $[-1,1]$, the worst case
Markov factor for  $M_n^{\rm wor}(L_2([-1,1],w(x)dx))$ is $n^2$.
In this case, the average  case Markov factor $M_n^{\rm
ave}(L_2([-1,1],w(x)dx))$ is just $n^{3/2}$.

There are many other paper devoted to discussing the Benstein or
(and) Markov type inequalities, see for example, \cite{BE, D, DP,
Di, Du,  J, Kr1, Kr2, Kr3, Kr4, Sa}. We remark that there are few
results about the weighted $L_p$ Markov type inequalities on
compact domains of $\Bbb R^d\ (d\ge2)$  for $1\le p<\infty$.

 The main purpose of this paper is to extend \eqref{1.5} to the spaces $L_{p,w}$ and \eqref{1.6} to the spaces $L_{2,w}$, where $w$ is a doubling weight on $\Bbb B^d$.
 Our results can be formulated as follows.

\begin{thm}Let $d\ge2$, $1\leq p <\infty$, and $w$ be a doubling weight on $\Bbb B^d$. Then we have
$$
M_n^{\rm wor}(L_{p,w}):=\sup_{0\neq P\in\Pi_n^d}\frac{\|\,|\nabla P|\,\|_{p,w}}{\|P\|_{p,w}}\ll n^2.
$$In particular, for the Jacobi weight $w_\mu(x)=(1-|x|^2)^{\mu-1/2}\ (\mu \geq 0)$, the exponent $2$ is sharp, that is
$$
M_n^{\rm wor}(L_{p,w_\mu}):=\sup_{0\neq P\in\Pi_n^d}\frac{\|\,|\nabla P|\,\|_{p,w_\mu}}{\|P\|_{p,w_\mu}}\asymp n^2.
$$
\end{thm}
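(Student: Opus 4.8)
For the upper bound $M_n^{\rm wor}(L_{p,w})\ll n^2$, the plan is to reduce the multivariate estimate on $\Bbb B^d$ to the one-dimensional weighted Markov inequality \eqref{1.5} by slicing along lines. First I would note that it suffices to bound each partial derivative $\|\partial_i P\|_{p,w}$ by $n^2\|P\|_{p,w}$, since $|\nabla P|\le\sum_i|\partial_i P|$. For a fixed direction, say $e_1$, I would integrate over the $d-1$ transverse variables and view $P$ as a univariate polynomial of degree $\le n$ along each line segment $\{(t,x')\in\Bbb B^d\}$. The key obstacle here is that the one-dimensional result \eqref{1.5} applies on $[-1,1]$ to a doubling weight \emph{in one variable}, and it is not immediate that the restriction of the $d$-dimensional doubling weight $w$ to a generic line is doubling on that line with a uniformly controlled constant. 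The natural route is to use the machinery of \cite{MT} adapted to $\Bbb B^d$: one works with a Christoffel-type function or a partition of $\Bbb B^d$ into the metric balls $\mathbf{B}(x,1/n)$ (on which $w$ behaves like a constant up to the doubling constant $L_w$) and proves a pointwise Bernstein--Markov estimate $|\nabla P(x)|\ll n^2\|P\|_{L_\infty(\mathbf{B}(x,c/n))}$, then integrates against $w$ and uses a Nikolskii/remez-type inequality on each ball to pass back from the sup-norm to the $L_p$-norm. This localization, rather than literal line-slicing, is what makes the doubling constant enter cleanly and uniformly.

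For the lower bound in the Jacobi case $w=w_\mu$, the plan is to exhibit an explicit extremal polynomial for which the Markov factor is $\gg n^2$. The standard choice is a tensor-type or ridge polynomial built from the one-dimensional extremizer for \eqref{1.4}: take $Q_n$ to be (a shifted rescaling of) the degree-$n$ Jacobi-weighted Markov extremal polynomial in a single variable $t$, and set $P(x)=Q_n(x_1)$ or, to align with the radial weight, $P(x)=Q_n(\langle x,\theta\rangle)$ for a suitable unit vector $\theta$. I would then compute both $\|P\|_{p,w_\mu}$ and $\|\partial_1 P\|_{p,w_\mu}=\|Q_n'(x_1)\|_{p,w_\mu}$ by integrating out the transverse variables; using the explicit asymptotics \eqref{1.1}--\eqref{1.2} for $w_\mu(\mathbf{B}(x,r))$, the $d$-dimensional weighted norms reduce to one-dimensional Jacobi-weighted norms with a shifted exponent, and the ratio inherits the $n^2$ growth from \eqref{1.4}. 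The main obstacle in this direction is bookkeeping: one must check that integrating the radial weight $(1-|x|^2)^{\mu-1/2}$ over the slices produces precisely a one-dimensional Jacobi weight $(1-x_1^2)^{\mu+(d-1)/2}$ (up to constants), so that the sharp univariate exponent $2$ survives the reduction and is not diluted by the extra dimensions.

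Combining the two bounds gives $M_n^{\rm wor}(L_{p,w_\mu})\asymp n^2$, completing the proof. I expect the harder half to be the upper bound: establishing the pointwise $n^2$ Markov estimate on each metric ball $\mathbf{B}(x,1/n)$ with constants depending only on $d$ and $L_w$, and then stitching these local estimates into a global $L_{p,w}$ inequality without losing a factor of $n$, is where the doubling hypothesis must be used most carefully.
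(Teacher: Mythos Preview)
Your lower bound argument is essentially the paper's: both take a ridge polynomial $P(x)=f(x_1)$ with $f$ a one-dimensional extremizer, integrate out the transverse variables so that the radial weight $(1-|x|^2)^{\mu-1/2}$ becomes the one-dimensional Jacobi weight $(1-x_1^2)^{\lambda-1/2}$ with $\lambda=\mu+(d-1)/2$, and invoke \eqref{1.4}.

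For the upper bound, however, your plan diverges from the paper and contains a real gap. The paper does not localize to metric balls or use a pointwise Markov estimate on $\mathbf{B}(x,c/n)$; it works through the kernel representation $\partial_i P(x)=\int_{\Bbb B^d}P(y)\,\partial_{x_i}L_n(w_{1/2};x,y)\,dy$, proves a sharp pointwise decay bound on $\partial_{x_i}L_n$ (Theorem~3.6, obtained from the Lip~1 bound on $L_n$ together with the Schur-type Lemma~3.2 and the polynomial approximation Lemma~3.3), and then applies H\"older and Fubini. The doubling hypothesis enters only through the equivalence $\|f\|_{p,w}\asymp\|f\|_{p,w_n}$ (Corollary~4.4), which replaces $w$ by the smoothed weight $w_n$ satisfying the growth bound \eqref{4.10}.

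The specific step you propose, $|\nabla P(x)|\ll n^2\|P\|_{L_\infty(\mathbf{B}(x,c/n))}$, is the problem. In the interior a metric ball $\mathbf{B}(x,c/n)$ is comparable to a Euclidean ball of radius $\asymp 1/n$, and Markov's inequality on such a ball gives only $\|\nabla P\|_\infty\ll n^3\|P\|_\infty$, not $n^2$; near the boundary the normal width of $\mathbf{B}(x,c/n)$ is even smaller, $\asymp 1/n^2$, so the local scaling is no better. Thus the localization you describe would lose exactly the factor of $n$ you were trying to protect. Your instinct that naive line-slicing fails for general doubling $w$ is correct, but the remedy is not a local sup-norm Markov bound; one needs either the kernel estimates of Section~3 (as in the paper) or an $L_p$ Markov inequality for the smooth weight $w_n$ itself, where the one-dimensional argument can be transplanted.
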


\begin{thm}Let $d\ge2$ and $w$ be a doubling weight on $\Bbb B^d$. Then we have
$$ M_n^{\rm ave}(L_{2,w}):=\Bbb E\Big(\frac{\|\,|\nabla P_{\bf
a}|\,\|_{2,w}}{\|P_{\bf a}\|_{2,w}}\Big)\ll
n^{3/2}.$$\end{thm}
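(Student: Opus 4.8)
The plan is to separate numerator and denominator by the Cauchy--Schwarz inequality for the Gaussian average and then to reduce everything to one deterministic estimate. Writing $N:=N_n^d$ and applying Cauchy--Schwarz to the random product $\|\,|\nabla P_{\mathbf a}|\,\|_{2,w}\cdot\|P_{\mathbf a}\|_{2,w}^{-1}$, I first obtain
$$M_n^{\rm ave}(L_{2,w})\le\Big(\Bbb E\,\|\,|\nabla P_{\mathbf a}|\,\|_{2,w}^2\Big)^{1/2}\Big(\Bbb E\,\|P_{\mathbf a}\|_{2,w}^{-2}\Big)^{1/2}.$$
The denominator is explicit: since $\{P_j\}$ is orthonormal in $L_{2,w}$ one has $\|P_{\mathbf a}\|_{2,w}^2=\sum_j a_j^2=\|\mathbf a\|_2^2$, so $\|\mathbf a\|_2^2/\sigma^2$ has the $\chi^2$-distribution with $N$ degrees of freedom and $\Bbb E\,\|P_{\mathbf a}\|_{2,w}^{-2}=\sigma^{-2}(N-2)^{-1}\asymp\sigma^{-2}(N_n^d)^{-1}$. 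For the numerator, expanding $\partial_iP_{\mathbf a}=\sum_ja_j\,\partial_iP_j$ and using $\Bbb E(a_ja_k)=\sigma^2\delta_{jk}$ gives the exact identity $\Bbb E\,\|\,|\nabla P_{\mathbf a}|\,\|_{2,w}^2=\sigma^2\sum_j\|\,|\nabla P_j|\,\|_{2,w}^2$. Hence the theorem follows once I prove the single estimate
$$\sum_{j=1}^{N_n^d}\|\,|\nabla P_j|\,\|_{2,w}^2\ll n^{3}\,N_n^d,$$
because the two factors then multiply to $(\sigma^2n^3N_n^d)^{1/2}\cdot(\sigma^{-2}(N_n^d)^{-1})^{1/2}=n^{3/2}$. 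Note the crude route of bounding each $\|\,|\nabla P_j|\,\|_{2,w}\ll n^2$ by Theorem~1.1 only yields $n^4N_n^d$, hence $M_n^{\rm ave}\ll n^2$; a finer pointwise argument is needed.

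To prove the key estimate I would pass to a pointwise Christoffel function for the gradient. For each $i$ the identity $\sum_j(\partial_iP_j(x))^2=\sup\{|\partial_iQ(x)|^2:Q\in\Pi_n^d,\ \|Q\|_{2,w}\le1\}$ (the squared norm of the Riesz representer of the functional $Q\mapsto\partial_iQ(x)$) gives $\sum_j|\nabla P_j(x)|^2\le d\,\sup\{|\nabla Q(x)|^2:\|Q\|_{2,w}\le1\}$, so it suffices to bound $|\nabla Q(x)|$ pointwise. Here I would invoke two standard tools for doubling weights on $\Bbb B^d$ that also underlie Theorem~1.1: a pointwise Bernstein--Markov inequality $|\nabla Q(x)|\ll \frac{n}{\varphi_n(x)}\,\max_{\mathbf B(x,c/n)}|Q|$, where $\varphi_n(x):=\sqrt{1-|x|^2}+1/n$ interpolates between the interior Bernstein rate $n$ and the boundary Markov rate $n^2$, together with the local Nikolskii inequality $\max_{\mathbf B(x,c/n)}|Q|^2\ll w(\mathbf B(x,c/n))^{-1}\|Q\|_{2,w}^2$. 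Combining them yields the pointwise bound
$$\sum_j|\nabla P_j(x)|^2\ll\frac{n^2}{\varphi_n(x)^2\,w(\mathbf B(x,c/n))}.$$

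The last step integrates this against $w$, where the weight cancels against the Christoffel denominator. Covering $\Bbb B^d$ by a family of $\asymp n^d$ finitely overlapping balls $\mathbf B(x_k,c/n)$, on which $\varphi_n\asymp\varphi_n(x_k)$ and $w(\mathbf B(\,\cdot\,,c/n))\asymp w(\mathbf B(x_k,c/n))$ by doubling, I get
$$\sum_j\|\,|\nabla P_j|\,\|_{2,w}^2\ll n^2\int_{\Bbb B^d}\frac{w(x)\,dx}{\varphi_n(x)^2\,w(\mathbf B(x,c/n))}\asymp n^2\sum_k\frac1{\varphi_n(x_k)^2}.$$
Using the Lebesgue-measure estimate $|\mathbf B(x_k,c/n)|\asymp n^{-d}\varphi_n(x_k)$ (the case $\mu=0$ of \eqref{1.1}) to convert the sum back into an integral gives $\sum_k\varphi_n(x_k)^{-2}\asymp n^d\int_{\Bbb B^d}\varphi_n(x)^{-3}\,dx\asymp n^{d+1}$, the final integral being $\asymp n$ because the boundary region $\{x:\sqrt{1-|x|^2}\ll 1/n\}$ dominates. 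Thus $\sum_j\|\,|\nabla P_j|\,\|_{2,w}^2\ll n^{d+3}\asymp n^3N_n^d$, completing the reduction.

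The main obstacle is the pointwise step: establishing the gradient Bernstein--Markov inequality with the sharp interpolating factor $n/\varphi_n(x)$ and the matching local Nikolskii inequality for an \emph{arbitrary} doubling weight, uniformly up to the boundary $\partial\Bbb B^d$. Once these are in hand the probabilistic and integration steps are routine. It is worth emphasizing that the weight disappears from the final answer precisely because the Christoffel function $w(\mathbf B(x,1/n))$ cancels $w(x)$ in the integral, which is why the bound $n^{3/2}$ is independent of $w$.
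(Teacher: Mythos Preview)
Your proposal is correct and converges on the same decisive estimate as the paper, namely $\sum_{j=1}^{N_n^d}\|\,|\nabla P_j|\,\|_{2,w}^2\ll n^{d+3}$, but the route to it differs in two respects. First, your probabilistic reduction via Cauchy--Schwarz and the explicit $\chi^2$ moment is a clean alternative to the paper's use of the trace formula \eqref{2.4}--\eqref{2.5} taken from \cite{B}; both give the same target inequality. Second, and more substantively, for the pointwise bound on $\sum_j|\nabla P_j(x)|^2$ you invoke the Riesz-representer identity together with an \emph{unweighted} Bernstein--Markov inequality $|\nabla Q(x)|\ll (n/\varphi_n(x))\max_{\mathbf B(x,c/n)}|Q|$ and the Christoffel-function (local Nikolskii) bound, whereas the paper works directly with the integral representation \eqref{2.3} and the kernel-derivative estimate of Theorem~3.6 (via Remark~3.7), then combines it with \eqref{4.4}. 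Your path is conceptually transparent and makes the cancellation of the weight very visible, but it outsources the hardest step---the sharp pointwise gradient bound with the interpolating factor $n/\varphi_n(x)$---to an external result you do not prove; the paper's kernel approach is precisely a self-contained way of supplying that step. Note in particular that the paper's argument passes through $w_n$ (Corollary~4.4) so that the factor $w_n(x)$ cancels exactly against the pointwise bound, leaving $\int_{\Bbb B^d}(\mathcal W_{1/2}(n;x))^{-3}\,dx\asymp n$, which is the same integral $\int_{\Bbb B^d}\varphi_n(x)^{-3}\,dx\asymp n$ that your covering argument produces. Your integration step is fine; the only point to tighten is to either cite a source for the pointwise Bernstein--Markov inequality on $\Bbb B^d$ or observe that it follows from the paper's own Theorem~3.6 applied to the reproducing-kernel representation of $Q$.
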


We organize this paper as follows. In Section 2, we give the
integration representation of partial derivatives of polynomials
and provide a brief introduction about the average case Markov
factors. After that, in Section 3, we obtain the upper estimates
of the partial derivative of integral  kernels. Section 4 is
devoted to discussing  the Christoffel function on $\Bbb B^d$. In
Section 5, we give the proofs of the main results.

\section{Preliminaries}

\subsection{Integration representation  of  partial
derivatives of  polynomials }

\

We denote  by $\mathcal{V}_n^d$ the space of all  polynomials of degree $n$
which are orthogonal to lower degree polynomials in $L_{2,w_\mu}.$
It is well known (see \,\cite{DX}, p. 38 or\,p. 229) that the spaces
$\mathcal{V}_n^d$ are just the eigenspaces corresponding to the
eigenvalues $-n(n+2\mu+d-1)$ of the second-order differential
operator
$$D_\mu:=\triangle-(x\cdot\nabla)^2-(2\mu+d-1)x\cdot \nabla,$$
where the $\triangle$ is the Laplace operator. Also, the spaces
$\mathcal{V}_n^d$  are mutually orthogonal in $L_{2,w_\mu}$ and
$$\Pi_n^d= \ \bigoplus_{k=0}^{n}\mathcal{V}_k^d,\ \ \ \ \  L_{2,w_\mu}= \ \bigoplus_{n=0}^{\infty}\mathcal{V}_n^d. $$

 The orthogonal projector $$Proj_n :\, L_{2,w_\mu}\rightarrow \mathcal{V}_n^d$$ can be written
 as$$ (Proj_n f)(x)=\int_{\mathbb{B}^{d}}f(y)P_n(w_\mu; x, y)w_{\mu}(y)dy,$$ where $P_n(w_\mu; x, y)$
 is the reproducing kernel of $\mathcal{V}_n^d$. For  $\mu>0$, we
 have (see \cite{Xu1})
 $$P_n(w_\mu; x, y)= b_d^\mu b_1^{\mu-1/2}\frac{\lambda+n}{\lambda}\int_{-1}^{1}C_n^\lambda( x\cdot y
 + u\sqrt{1-|x|^2}\sqrt{1-|y|^2})(1-u^2)^{\mu-1}du,$$
where $C_n^\lz$ is the $n$-th degree Gegenbauer polynomial,
$$\lz=\mu+\frac{d-1}2\qquad {\rm and}\qquad
b_d^\mu:=\Big(\int_{\Bbb B^d}(1-|x|^2)^{\mu-1/2}dx\Big)^{-1}.$$ The
case $\mu=0$ is a limit case and we have
$$P_n(w_0; x, y)= b_d^0 \frac{\lambda+n}{2\lambda}[C_n^\lambda(x\cdot y+
\sqrt{1-|x|^2}\sqrt{1-|y|^2})+C_n^\lambda(x\cdot y-
\sqrt{1-|x|^2}\sqrt{1-|y|^2})].$$

Let $\eta$ be a
$C^\infty$-function on $[0,\infty)$ with $\eta(t)=1$ for $t\in
[0,1]$ and $\eta(t)=0$ for $t\ge2$. We define
\begin{equation}\label{2.1}L_n(w_\mu;x,y)=\sum_{j=0}^{\infty}\eta(\frac{j}n)P_j(w_\mu; x, y)=\sum_{j=0}^{2n-1}\eta(\frac jn)P_j(w_\mu; x, y).\end{equation}

 It is well known that $$L_n(w_\mu;x,y)=L_n(w_\mu;y,x),\ \ x,y\in\Bbb B^d,$$ $$ L_n(w_\mu;\cdot,y)\in\Pi_{2n}^d\ \ {\rm for\ fixed} \ y\in \Bbb B^d,$$ and for any $P\in\Pi_n^d$,
 \begin{equation}\label{2.2}P(x)=\int_{\Bbb B^d}P(y)L_{n}(w_\mu;x,y)w_\mu(y)dy,\ \ x\in
 \Bbb B^d,\end{equation}
 which deduces that
 \begin{equation}\label{2.3}\partial_i P(x)=
 \int_{\Bbb B^d}P(y)\partial_{x_i} L_n(w_\mu;x, y)w_\mu(y)dy,\ \ 1\le i\le d,  \end{equation}
where $\partial_{x_i} L_n(w_\mu;x, y)$ means the partial derivative about $x_i$.
So in order  to estimate the norm of  $\partial_i P(x)$, it is
vital to estimate $\partial_{x_i} L_n(w_\mu;x, y)$.

The following lemma will be used in the following sections.

\begin{lem}(See \cite[Lemma 4.6 ]{PX}.) Let $0<p<\infty,\ \mu\ge 0,$ and
$\sigma>\frac{d}{p}+2\mu|\frac{1}{p}-\frac{1}{2}|$.
 Then \begin{equation*}J_p:=\int_{\mathbb{B}^d}\frac{w_\mu(y)dy}{\mathcal{W_\mu}(n;y)^{\frac{p}{2}}(1+nd(x,y))^{\sigma p}}
 \ll n^{-d}{\mathcal{W_\mu}(n;x)}^{1-\frac{p}{2}}.\end{equation*}\end{lem}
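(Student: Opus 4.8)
The plan is to estimate $J_p$ by a dyadic decomposition of $\mathbb B^d$ about the fixed point $x$ in the metric $d(\cdot,\cdot)$, feeding in the measure asymptotics \eqref{1.1}--\eqref{1.2}. Writing $t_y:=\sqrt{1-|y|^2}$, the equivalence $d\asymp\tilde d$ recorded in the introduction gives, for $d(x,y)\le r$, the bound $|t_x-t_y|\ll r$ and hence the slowly-varying comparison $t_y+r\asymp t_x+r$. Put $A_0:=\mathbf B(x,1/n)$ and $A_k:=\{y\in\mathbb B^d:\ 2^{k-1}/n<d(x,y)\le 2^k/n\}$ for $k\ge1$, so that $1+nd(x,y)\asymp 2^k$ on $A_k$ and
$$J_p\asymp\sum_{k\ge0}2^{-k\sigma p}\int_{A_k}\mathcal W_\mu(n;y)^{-p/2}\,w_\mu(y)\,dy.$$
I would then split the scales at the critical index $k_0$ determined by $2^{k_0}/n\asymp t_x$ (taking $k_0=0$ when $t_x<1/n$), treating the interior scales $k\le k_0$ and the boundary scales $k>k_0$ separately.

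For the interior scales $2^k/n\ll t_x$, the comparison $t_y+r\asymp t_x+r$ gives $t_y\asymp t_x$ on $A_k$, so that $\mathcal W_\mu(n;y)\asymp\mathcal W_\mu(n;x)$ is essentially constant there; combined with $w_\mu(A_k)\le w_\mu(\mathbf B(x,2^k/n))\asymp (2^k/n)^d\, t_x^{2\mu}$ from \eqref{1.1} and $t_x^{2\mu}\asymp\mathcal W_\mu(n;x)$, each summand is $\ll 2^{k(d-\sigma p)}n^{-d}\mathcal W_\mu(n;x)^{1-p/2}$. Since $\sigma p>d$, this is a convergent geometric series dominated by its $k=0$ term, contributing the desired bound $n^{-d}\mathcal W_\mu(n;x)^{1-p/2}$. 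This part uses only $\sigma>d/p$ and is where that portion of the hypothesis is consumed.

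The boundary scales $k>k_0$ are the crux, because there $\mathbf B(x,2^k/n)$ reaches $\partial\mathbb B^d$ and $\mathcal W_\mu(n;y)$ is no longer comparable to $\mathcal W_\mu(n;x)$: the height $t_y$ ranges over all of $[0,\,2^k/n]$. I would handle this by a secondary decomposition of each $A_k$ by height, $S_j:=\{y\in A_k:\ t_y+1/n\asymp 2^j/n\}$ for $0\le j\le k$, on which $\mathcal W_\mu(n;y)^{-p/2}\asymp (2^j/n)^{-\mu p}$. A covering of $S_j$ by $\asymp 2^{(k-j)(d-1)}$ balls of radius $2^j/n$, each of $w_\mu$-mass $\asymp(2^j/n)^{d+2\mu}$ by \eqref{1.1}, gives $w_\mu(S_j)\asymp 2^{k(d-1)+j(1+2\mu)}n^{-(d+2\mu)}$ (note that summing over $j$ reproduces $w_\mu(\mathbf B(x,2^k/n))\asymp(2^k/n)^{d+2\mu}$, a useful consistency check). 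Carrying out the inner sum $\sum_{0\le j\le k}2^{j(1+2\mu-\mu p)}$ — which is dominated by its top term $j=k$ when $p\le2$ and by its bottom term $j=0$ when $p\ge2$ — and then the outer sum in $k$, the whole boundary contribution forms a geometric series in $k$ that converges to the claimed bound $n^{-d}\mathcal W_\mu(n;x)^{1-p/2}$ precisely when $\sigma p>d+2\mu|1-p/2|$, equivalently $\sigma>\frac dp+2\mu|\frac1p-\frac12|$. The main obstacle is exactly this boundary analysis: the appearance of the absolute value in the exponent, and the role of the critical scale $2^{k_0}\asymp n t_x$ in matching the two regimes, both stem from the competition between the decay factor $2^{-k\sigma p}$ and the growth of the weighted measure near the boundary, and they must be tracked for $p\le2$ and $p\ge2$ separately.
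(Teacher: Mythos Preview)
The paper does not supply its own proof of this lemma; it is quoted directly from \cite[Lemma~4.6]{PX}, so there is no in-paper argument to compare against. Your dyadic-shell approach is correct and is in the spirit of the original argument in \cite{PX}: the decomposition into annuli $A_k$, the separation at the scale $2^{k_0}/n\asymp t_x+1/n$, and the secondary height decomposition of the boundary shells are exactly the right mechanism for producing the exponent $d/p+2\mu|1/p-1/2|$.

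One minor imprecision worth flagging: the inner sum $\sum_{j=0}^{k}2^{j(1+2\mu-\mu p)}$ is dominated by its top term $j=k$ whenever $1+2\mu-\mu p>0$, i.e.\ for $p<2+1/\mu$, not merely for $p\le 2$; and by its bottom term $j=0$ only for $p>2+1/\mu$. This does not damage your conclusion, because in the intermediate range $2\le p\le 2+1/\mu$ the hypothesis $\sigma p>d+\mu(p-2)$ is stronger than the threshold $\sigma p>d+\mu(2-p)$ that the top-term estimate actually requires, so the outer geometric series in $k$ still converges to the claimed bound. The covering count $\asymp 2^{(k-j)(d-1)}$ for $S_j$ is most cleanly justified by passing to the upper hemisphere of $\mathbb{S}^d$ via $x\mapsto(x,t_x)$ (as in Lemma~3.5 of the paper), where $S_j$ becomes the intersection of a spherical cap of radius $\asymp 2^k/n$ with an equatorial band of width $\asymp 2^j/n$; you may want to say this explicitly.
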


\subsection{Average case Markov factors}

\

Let $\mathcal{B}_n :=\{P_1,P_2,\dots,P_{N_n^{d}}\}$ be an
orthonormal basis for $\Pi_n^d$ with respect to the Hilbert space
$X=L_{2,w}$, where $w$ is a doubling weight on $\Bbb B^d$. We also
suppose that the basis $\mathcal{B}_n$ is  consistent with the
degree, i.e., for $i < j,\ \deg(P_i)\leq\ \deg(P_j)$.
Then
for a random polynomial
$P_{\a}(x)=\sum\limits_{j=1}^{N_n^d}a_jP_j(x)$, we have
$$\partial_i P_{\a}\in \Pi_{n-1}^d,\ 1\le i\le d,$$where
$\a=(a_1,a_2,\dots,a_{N_n^d})^T\in \Bbb R^{N_{n}^d}$, and the
coefficients $a_i$ are independent $N(0,\sigma^2)$ random
variables. This means that
 $$\partial_i P_{\a}(x)=\sum_{j=1}^{N_{n-1}^d}b_j^iP_j(x),\ 1\le i\le d.$$ Here $\b^i=(b_1^i,b_2^i,\dots,b_{N_{n-1}^d}^i)^T\in \Bbb R^{N_{n-1}^d}$
 with $\b^i =\mathbb{D}_i\a,$  and the matrix $\mathbb{D}_i\in \Bbb R^{{N_{n-1}^d}\times{N_n^d}}$ is called the corresponding differentiation matrix.
Using this notation we may write $$\frac{\|\partial_i
P_\a\|_{2,w}}{\|P_\a\|_{2,w}}=\frac{\|\mathbb{D}_i
\a\|_2}{\|\a\|_2},\ \ 1\le i\le d.$$
It follows from \cite[Proposition 2.5 and Lemma 2.6]{B} that for $X=L_{2,w}$,
\begin{align}\label{2.4}M_n^{\rm ave}(X)&=\Bbb E\Big(\frac{\|\,|\nabla P_{\bf a}|\,\|_{2,w}}{\|P_\a\|_{2,w}}\Big)=\Bbb E \Big(\sum\limits_{i=1}^{d}\frac{\|\Bbb
D_i\a\|_2^2}{\|\a\|_2^2}\Big)^{\frac{1}{2}}\notag\\&\asymp
\sum\limits_{i=1}^{d}\Bbb E\frac{\|\Bbb D_i\a\|_2}{\|\a\|_2}\asymp
n^{-\frac{d}{2}}\sum\limits_{i=1}^d\sqrt{{\rm tr}({\Bbb D}_i^T
{\Bbb D}_i)}\ ,
\end{align}
where ${{\Bbb D}_i}^T$ is the transpose of ${\Bbb D}_i$, ${\rm tr}
(Q)$ is the trace of a matrix $Q$, and
\begin{equation}\label{2.5}
{\rm tr}({\Bbb D}_i^T {\Bbb D}_i)=\sum_{j=1}^{N_n^d}
\|\partial_iP_j\|_{2,w}^2,\ \ 1\le i\le d.
\end{equation}

\section{Upper estimates of partial derivatives of $L_n(w_\mu;x, y)$ }

In this section, we will estimate the upper bound of
$\partial_{x_i} L_n(w_\mu;x,y)$. The following lemma states that
$L_n(w_\mu;x, y)$ is a Lip 1 function in $x$  with respected to
the distance $d$ on $\mathbb{B}^d$.

\begin{lem}(See \cite[Proposition 4.7 ]{PX}.) Let $y,u\in \mathbb{B}^d,\ n\in\mathbb{N}_+$, $\delta>0$, and $\mu\ge 0.$ Then for all
$ z,\, \xi\in \mathbf{B}(y,\frac{\delta}{n}),$ and any $k>0$, we
have
\begin{equation}|L_n(w_\mu;y,u)-L_n(w_\mu;z,u)|\ll
\frac{n^{d+1}d(y,z)}{\sqrt{\mathcal{W_\mu}(n;u)}\sqrt{\mathcal{W_\mu}(n;\xi)}(1+nd(u,\xi))^k}.
\label{3.1}\end{equation}\end{lem}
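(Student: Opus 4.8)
The plan is to prove the Lipschitz estimate \eqref{3.1} by exploiting the fact that $L_n(w_\mu;\cdot,u)$ is a polynomial of degree at most $2n$, so that its variation between two nearby points $y$ and $z$ can be controlled by a gradient bound combined with a pointwise size estimate for $L_n$ itself. First I would recall the known pointwise upper bound for the kernel, namely the estimate
\begin{equation*}
|L_n(w_\mu;z,u)|\ll \frac{n^d}{\sqrt{\mathcal{W_\mu}(n;z)}\sqrt{\mathcal{W_\mu}(n;u)}(1+nd(z,u))^k},
\end{equation*}
valid for every $k>0$ (this is the standard localization of the smoothed Jacobi kernel on the ball, whose proof rests on the Gegenbauer representation of $P_j(w_\mu;x,y)$ together with the doubling asymptotics \eqref{1.1}--\eqref{1.2}). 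Since both $y$ and $z$ lie in $\mathbf{B}(y,\delta/n)$, the weight factors $\mathcal{W_\mu}(n;\cdot)$ and the decay factors $(1+nd(\cdot,u))^k$ evaluated at $y$, $z$, and the auxiliary point $\xi$ are all comparable, by the doubling property and the triangle inequality for $d$. This comparability is what will let me replace the point-dependent factors in the size bound by the symmetric combination $\sqrt{\mathcal{W_\mu}(n;u)}\sqrt{\mathcal{W_\mu}(n;\xi)}(1+nd(u,\xi))^k$ appearing in \eqref{3.1}.

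Next I would pass from the pointwise size bound to the increment bound. The cleanest route is a Bernstein-type inequality along the geodesic: because $L_n(w_\mu;\cdot,u)\in\Pi_{2n}^d$, the tangential derivative of $L_n$ at any point of $\mathbf{B}(y,\delta/n)$ is bounded by $n$ times the local sup-norm of $L_n$ on a slightly enlarged ball $\mathbf{B}(y,2\delta/n)$, and then
\begin{equation*}
|L_n(w_\mu;y,u)-L_n(w_\mu;z,u)|\ll d(y,z)\cdot n\cdot \sup_{\zeta\in\mathbf{B}(y,2\delta/n)}|L_n(w_\mu;\zeta,u)|.
\end{equation*}
Inserting the size bound at the supremizing point $\zeta$ and using the comparability from the previous step to rewrite everything in terms of $\xi$ produces exactly the extra factor $n$ and the distance factor $d(y,z)$ that upgrade $n^d$ to $n^{d+1}d(y,z)$ in \eqref{3.1}. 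The freedom in choosing $\xi\in\mathbf{B}(y,\delta/n)$ is harmless because all points in this ball are $d$-equivalent at scale $1/n$.

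The main obstacle is controlling the geometry near the boundary of $\Bbb B^d$, where the weight $\mathcal{W_\mu}(n;x)=(\sqrt{1-|x|^2}+1/n)^{2\mu}$ degenerates and the distance $d$ behaves anisotropically (radial versus tangential directions are on different scales). Specifically, the Bernstein inequality on the ball is not the naive Euclidean one: the correct gradient bound must respect the metric $d$, for which the admissible step in the radial direction shrinks like the square root of the boundary distance. I would handle this by working with the sub-Riemannian structure induced by $d$ (equivalently, via the $\tilde d$ representation), using the weighted Bernstein inequality on $\Bbb B^d$ adapted to $w_\mu$ rather than the flat one; this guarantees that the factor $n$ produced by differentiation is the geometrically correct one and that the weight factors transform consistently under the step from $y$ to $z$. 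Once this metric-sensitive Bernstein estimate is in place, the remaining manipulations are the routine comparability arguments described above, and the bound \eqref{3.1} follows.
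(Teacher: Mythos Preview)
The paper does not supply its own proof of this lemma; it is quoted verbatim from \cite[Proposition~4.7]{PX}, so there is no in-paper argument to compare your proposal against.

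That said, your sketch has a genuine gap at the Bernstein step. You claim that the tangential derivative of $L_n(w_\mu;\cdot,u)$ at a point of $\mathbf{B}(y,\delta/n)$ is bounded by $n$ times the \emph{local} sup-norm of $L_n$ over the slightly larger ball $\mathbf{B}(y,2\delta/n)$. No such local Bernstein inequality holds for polynomials: a degree-$n$ polynomial can be uniformly small on a ball of radius $O(1/n)$ while having large derivatives there, because it is large elsewhere on $\mathbb{B}^d$. The standard Bernstein inequality (in the metric $d$, or equivalently after lifting to $\mathbb{S}^d$) only controls the derivative by $n$ times the \emph{global} sup of $L_n(\cdot,u)$, and that global sup is attained near $u$, not near $y$; inserting it destroys the decay factor $(1+nd(u,\xi))^{-k}$ you need.

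There are two ways to repair this. One is the route actually taken in \cite{PX}: use the explicit Gegenbauer integral representation of $L_n(w_\mu;x,y)$, differentiate under the integral, and estimate the derivative of the one-variable kernel directly; this produces the extra factor $n\,d(y,z)$ together with the full localized decay. The other is the multiplication trick the present paper uses later in the proof of Theorem~3.6: replace the desired decay factor $\sqrt{\mathcal{W}_\mu(n;\cdot)}(1+nd(\cdot,u))^k$ by a comparable polynomial $G_n\in\Pi_n^d$ via Lemma~3.3, observe that $L_n(\cdot,u)G_n(\cdot)$ is then a polynomial bounded \emph{globally} by $n^d/\sqrt{\mathcal{W}_\mu(n;u)}$, apply the \emph{global} Bernstein inequality to the product, and then divide out $G_n$. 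Either route yields \eqref{3.1}; your outline, as written, does not, because the inequality you invoke at the key step is false.
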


The following lemma gives the extension of the  Schur inequality
on the interval $[-1,1]$ to the multidimensional case.

\begin{lem}(See \cite{J}, \cite[Lemma]{Sa}.) If  $P\in \Pi_{m-1}^d$  satisfies$$|P(x)|\le {\frac{C}{\sqrt{1-|x|^2}}},\ |x|<1,$$
then $$|P(x)|\le C m,\ \ x\in\Bbb B^d.$$
\end{lem}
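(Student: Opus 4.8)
The statement to prove is: if $P\in\Pi_{m-1}^d$ satisfies $|P(x)|\le C/\sqrt{1-|x|^2}$ for $|x|<1$, then $|P(x)|\le Cm$ on all of $\Bbb B^d$. Let me think about how I would prove this.

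The key structural observation is that the hypothesis only becomes binding near the boundary $|x|=1$, where the factor $1/\sqrt{1-|x|^2}$ blows up; away from the boundary the hypothesis already gives a uniform bound. So the real content is a boundary estimate, and the natural strategy is to reduce the $d$-dimensional claim to the classical one-dimensional Schur inequality on $[-1,1]$ by restricting $P$ to line segments.

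Let me reason through this.
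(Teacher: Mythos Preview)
The paper does not give its own proof of this lemma; it simply cites \cite{J} and \cite[Lemma]{Sa}. Your strategy---reduce to the classical one-variable Schur inequality by restricting $P$ to line segments---is exactly the argument in those references, so the approach is correct and matches the cited sources.

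That said, your proposal stops at the outline stage; here is how to finish it in one line. Given $x_0\in\Bbb B^d$ with $x_0\neq 0$, set $Q(t):=P\bigl(t\,x_0/|x_0|\bigr)$ for $t\in[-1,1]$. Then $Q\in\Pi_{m-1}^1$ and, since $|t\,x_0/|x_0||=|t|$, the hypothesis gives $|Q(t)|\le C/\sqrt{1-t^2}$ for $|t|<1$. The classical Schur inequality on $[-1,1]$ then yields $|Q(t)|\le Cm$ for all $t\in[-1,1]$, and in particular $|P(x_0)|=|Q(|x_0|)|\le Cm$. The point $x_0=0$ is trivial since $|P(0)|\le C\le Cm$. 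Note that restricting to an \emph{arbitrary} chord would not work directly, because the weight $1/\sqrt{1-|x|^2}$ does not restrict to $1/\sqrt{1-t^2}$ on a non-radial chord; the radial line through the origin is the natural choice.
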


The next lemma asserts that a nonnegative function on
$\mathbb{B}^d$ satisfying some inequality can be replaced by the
$p$-th power of a nonnegative polynomial on $\mathbb{B}^d$.

\begin{lem} Suppose that $\alpha>0,\ n\in\mathbb{N}_+$, and $f$ is a nonnegative function on $\mathbb{B}^d$ satisfying
\begin{equation}\label{3.2}f(x)\leq C(1+nd(x,y))^{\alpha}f(y),\ \ x, y\in\mathbb{B}^d. \end{equation}
Then for any $0<p<\infty$, there exists a nonnegative polynomial $h\in\Pi_n(\Bbb B^d)$ such that
\begin{equation}\label{3.3}
c^{-1}f(x)\le(h(x))^p\le c f(x),\ \ x\in\mathbb{B}^d,
\end{equation}
where $c>0$ depends only on $d,\ C,\ p$ and $\alpha$.

In addition, if $f$ is a nonnegative function on $\Bbb B^d$
satisfying \eqref{3.2} with $f(x)=f(-x),\ x\in \Bbb B^d$, then the
function $h$ in \eqref{3.3} can be chosen to satisfy $h(x)=h(-x),\
x\in\Bbb B^d$; if $f$ is a nonnegative function on $\Bbb B^d$
satisfying \eqref{3.2} with $f(x)=F(|x|),\ x\in \Bbb B^d$, where $F$ is
a nonnegative function on $[0,+\infty)$, then the function $h$ in
\eqref{3.3} can be chosen to satisfy $h(x)=G(|x|),\ x\in\Bbb B^d$, where
$G$ is a nonnegative function on $[0,+\infty)$.  \end{lem}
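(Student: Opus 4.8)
The plan is to realize $h^p$ as a superposition of highly localized nonnegative polynomials attached to a net, after first replacing $f$ by its $p$-th root. First I would reduce to comparing a single function with a single polynomial. Put $g:=f^{1/p}$; raising \eqref{3.2} to the power $1/p$ shows $g\ge0$ and
$$g(x)\le C^{1/p}(1+nd(x,y))^{\beta}g(y),\qquad \beta:=\alpha/p,\quad x,y\in\Bbb B^d. $$
Interchanging $x$ and $y$ gives the companion lower bound, so $g$ is comparable to itself on every ball of radius $\asymp1/n$. It therefore suffices to produce a nonnegative $h\in\Pi_n(\Bbb B^d)$ with $c_1^{-1}g\le h\le c_1 g$ on $\Bbb B^d$; raising to the power $p$ then yields \eqref{3.3} with $c=c_1^p$.

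Next I would fix a maximal $\tfrac1n$-separated set $\{y_j\}_{j=1}^M\subset\Bbb B^d$ for the distance $d$, so the balls $\mathbf{B}(y_j,\tfrac1n)$ cover $\Bbb B^d$ while the balls $\mathbf{B}(y_j,\tfrac1{2n})$ are pairwise disjoint; $M$ is finite since $\Bbb B^d$ is compact. Writing $m:=\lfloor n/4\rfloor$, I would attach to each center the nonnegative polynomial
$$\phi_j(x):=\frac{L_m(w_0;x,y_j)^2}{L_m(w_0;y_j,y_j)^2}\in\Pi_n(\Bbb B^d),$$
which has degree $\le4m\le n$ and satisfies $\phi_j(y_j)=1$. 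The standard decay bounds for $L_m(w_0;\cdot,\cdot)$ in \cite{PX} give $\phi_j(x)\ll(1+nd(x,y_j))^{-\gamma}$ for any prescribed $\gamma$, and the Lipschitz estimate \eqref{3.1} together with $L_m(w_0;y_j,y_j)\asymp m^d$ (here $\mathcal{W}_0\equiv1$) gives $\phi_j(x)\asymp1$ whenever $nd(x,y_j)\ll1$. I then set
$$h(x):=\sum_{j=1}^M g(y_j)\,\phi_j(x)\in\Pi_n(\Bbb B^d),$$
a nonnegative polynomial.

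For the upper bound, applying $g(y_j)\le C^{1/p}(1+nd(x,y_j))^{\beta}g(x)$ in each term gives $h(x)\ll g(x)\sum_j(1+nd(x,y_j))^{-(\gamma-\beta)}$. Since the balls $\mathbf{B}(y_j,\tfrac1{2n})$ are disjoint, $w_0(\mathbf{B}(y_j,\tfrac1{2n}))\asymp n^{-d}$ by \eqref{1.1}, and $1+nd(x,y_j)\asymp1+nd(x,y)$ for $y\in\mathbf{B}(y_j,\tfrac1{2n})$, the sum is dominated by $n^{d}\int_{\Bbb B^d}(1+nd(x,y))^{-(\gamma-\beta)}w_0(y)\,dy$, which is $\ll1$ by the integral estimate of \cite[Lemma 4.6]{PX} (with $\mu=0$, $p=2$) once $\gamma-\beta>d$. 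For the lower bound, choose $j_0$ with $d(x,y_{j_0})\le\tfrac1n$; all terms being nonnegative, $h(x)\ge g(y_{j_0})\phi_{j_0}(x)\gg(1+nd(x,y_{j_0}))^{-\beta}g(x)\gg g(x)$. Hence $h\asymp g$. I expect this localization-plus-summability step, namely manufacturing nonnegative localized polynomials of degree $\le n$ with sufficient decay and summing their contributions over the net, to be the main obstacle; the required decay is exactly what reduces the summation to \cite[Lemma 4.6]{PX}.

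Finally, the symmetry refinements follow by group averaging, which preserves nonnegativity, degree and the comparison $h\asymp g$ because $d$ is invariant under the relevant maps. If $f(x)=f(-x)$, then $g$ is even and $x\mapsto-x$ is orthogonal, so $\tfrac12\bigl(h(x)+h(-x)\bigr)$ is an even nonnegative polynomial in $\Pi_n(\Bbb B^d)$ still comparable to $g$, whose $p$-th power is the desired even majorant. If $f(x)=F(|x|)$, then $g$ is radial and $d(Qx,Qy)=d(x,y)$ for every $Q\in O(d)$, so the Haar average $\int_{O(d)}h(Qx)\,dQ$ is a radial nonnegative polynomial in $\Pi_n(\Bbb B^d)$ comparable to $g$, giving the desired radial $h$.
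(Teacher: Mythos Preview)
Your argument is correct and follows a genuinely different route from the paper. The paper does \emph{not} build $h$ on $\Bbb B^d$ directly; instead it lifts $f$ to $\Bbb S^d$ via $T(f)(x,x_{d+1}):=f(x)$, observes that $d(x,y)\le\rho(\bar x,\bar y)$, and then invokes the spherical analogue (Lemma~3.4, due to Dai), where the approximating polynomial is realized as a convolution
\[
g(\bar x)=\int_{\Bbb S^d}f(\bar y)^{1/p}\,T_n(\bar x\cdot\bar y)\,d\sigma(\bar y)
\]
against an explicit Jackson-type kernel. Because the lift is even in $x_{d+1}$, Lemma~3.4 (with $A=\{\tau_{d+1}\}$) yields $g\in\widetilde{\Pi_n(\Bbb S^d)}$, and the bijection of Lemma~3.5 pulls it back to $h\in\Pi_n(\Bbb B^d)$. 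The symmetry refinements are handled the same way, by enlarging $A$ to include $-I$ or $O_{e_{d+1}}$.

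Your construction is a discrete superposition $h=\sum_j g(y_j)\phi_j$ of squared, normalized $L_m(w_0;\cdot,y_j)$ kernels over a $1/n$-net, with the upper bound coming from the off-diagonal decay of $L_m$ and the summation trick via Lemma~2.1, and the lower bound from a single well-placed bump. This avoids the detour through the sphere at the cost of importing more from \cite{PX}: the pointwise decay of $L_m$, the Lipschitz estimate \eqref{3.1}, and the on-diagonal asymptotic $L_m(w_0;y,y)\asymp m^d$ (equivalently the Christoffel estimate for $w_0$, which is classical for Jacobi weights and hence not circular with Theorem~4.1). One small point to tune: for the lower bound you need $\phi_{j_0}(x)\gg1$ when $d(x,y_{j_0})\le 1/n$, but \eqref{3.1} only gives $|L_m(w_0;x,y_{j_0})-L_m(w_0;y_{j_0},y_{j_0})|\ll m^{d+1}d(x,y_{j_0})\asymp m^d$, which competes with the on-diagonal value; you should shrink the net spacing to $c_0/n$ (or take $m=\lfloor n/K\rfloor$) for a suitable absolute constant before this step goes through. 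Your symmetry argument by averaging over $\{\pm I\}$ or $O(d)$ is arguably cleaner than the paper's (which tracks invariance through the spherical construction); it works because $d$ is invariant under these maps and $\Pi_n^d$ is a finite-dimensional $O(d)$-module.
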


 The proof of Lemma 3.3 is based on the following two lemmas.
\begin{lem} Suppose that $\alpha>0,\ n\in\mathbb{N}_+$, and $f$ is a nonnegative function on $\Bbb S^{d}$ satisfying
\begin{equation}f(x)\leq C(1+n\rho(x,y))^{\alpha}f(y),\ \ x, y\in\mathbb{S}^{d}.\label{3.4}\end{equation}
Then for any $\ 0<p<\infty$ there exists a nonnegative spherical
polynomial $g\in\Pi_n(\Bbb S^d)$ such
that\begin{equation}\label{3.5}c^{-1}f(x)\leq (g(x))^p\leq c
f(x),\  \  x\in\mathbb{S}^{d},\end{equation}where $c>0$ depends
only on $d,\ C,\ p$ and $\alpha$.

In addition, suppose that $A\subset O(d+1)$, where $O(d+1)$ is the
group of all orthogonal transformations on $\Bbb R^{d+1}$.  If
$f(x)=f(\tau x)$ for any $\tau\in A$, then  the function $g$ in
\eqref{3.5} can be chosen to be a  polynomial satisfying $g(x)=g(\tau
x)$ for any $\tau\in A$.
\end{lem}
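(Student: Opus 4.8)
The plan is to reduce the statement to constructing, for a given nonnegative function, a comparable nonnegative spherical polynomial of degree at most $n$. Set $h=f^{1/p}$. Since $f$ satisfies \eqref{3.4}, the function $h$ is nonnegative and obeys $h(x)\le C^{1/p}(1+n\rho(x,y))^{\alpha/p}h(y)$ for all $x,y\in\mathbb{S}^d$; that is, $h$ satisfies a hypothesis of the same form with $\alpha$ replaced by $\beta:=\alpha/p$ and $C$ by $C^{1/p}$. It therefore suffices to produce a nonnegative $g\in\Pi_n(\mathbb{S}^d)$ with $g(x)\asymp h(x)$ on $\mathbb{S}^d$, for then $(g(x))^p\asymp h(x)^p=f(x)$, which is exactly \eqref{3.5}.

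The key tool is a nonnegative, highly localized polynomial kernel on the sphere. I would take the spherical de la Vall\'ee Poussin kernel $V_m(x,y)=\sum_{j}\eta(j/m)Z_j(x\cdot y)$, where $Z_j$ is the zonal harmonic (reproducing kernel of the degree-$j$ spherical harmonics) and $\eta$ is the cutoff from \eqref{2.1}, so that $\deg_x V_m\le 2m$. By the standard localization of such kernels (the spherical analogue of the estimate behind Lemma 3.1), for every $s>0$ one has $|V_m(x,y)|\ll m^d(1+m\rho(x,y))^{-s}$, while $V_m(x,x)\asymp m^d$. Fixing $m=\lfloor n/4\rfloor$ and setting $\mathcal{L}_n(x,y):=n^{-d}\big(V_m(x,y)\big)^2$, the kernel $\mathcal{L}_n(\cdot,y)$ is a nonnegative element of $\Pi_n(\mathbb{S}^d)$ satisfying, for any prescribed $s$, both $\mathcal{L}_n(x,y)\ll n^d(1+n\rho(x,y))^{-s}$ and the near-diagonal lower bound $\mathcal{L}_n(x,y)\gg n^d$ whenever $\rho(x,y)\le 1/n$.

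Next I would discretize. Choose a maximal $\tfrac1n$-separated set $\{x_k\}$ on $\mathbb{S}^d$; it is a $\tfrac1n$-covering, has $\asymp n^d$ points, and the number of $x_k$ with $2^{i}/n\le\rho(x,x_k)<2^{i+1}/n$ is $\ll 2^{id}$. Define the candidate polynomial
$$g(x):=n^{-d}\sum_k h(x_k)\,\mathcal{L}_n(x,x_k),$$
which is a nonnegative element of $\Pi_n(\mathbb{S}^d)$. For the upper bound, apply $h(x_k)\le C^{1/p}(1+n\rho(x,x_k))^{\beta}h(x)$ together with the decay of $\mathcal{L}_n$ for some $s>\beta+d$ to obtain $g(x)\ll h(x)\sum_k(1+n\rho(x,x_k))^{\beta-s}$; grouping the $x_k$ into dyadic annuli and using the counting bound, the sum is $\ll\sum_{i\ge0}2^{i(d+\beta-s)}\ll 1$, so $g(x)\ll h(x)$. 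For the lower bound, pick $x_{k_0}$ with $\rho(x,x_{k_0})\le1/n$; then $h(x_{k_0})\asymp h(x)$ and $\mathcal{L}_n(x,x_{k_0})\gg n^d$, and since all terms are nonnegative, retaining this single term gives $g(x)\gg h(x)$. Thus $g\asymp h$, proving \eqref{3.5}. The main obstacle is the construction of $\mathcal{L}_n$: one must establish the two-sided localization of $V_m$ (a sharp near-diagonal lower bound and fast off-diagonal polynomial decay), after which the doubling-type hypothesis \eqref{3.4} does the rest through the annulus-counting estimate.

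Finally, for the invariance claim, suppose $f(\tau x)=f(x)$ for all $\tau\in A$; then also $h(\tau x)=h(x)$. Replacing $A$ by the closed subgroup $G\subset O(d+1)$ it generates leaves $h$ invariant, and I would symmetrize by setting $\bar g(x):=\int_G g(\tau x)\,d\tau$ with respect to normalized Haar measure. Averaging preserves nonnegativity and degree, so $\bar g\in\Pi_n(\mathbb{S}^d)$ is nonnegative and $G$-invariant, in particular $A$-invariant. Since $g(\tau x)\asymp h(\tau x)=h(x)$ uniformly in $\tau$, the two-sided comparison survives the average, giving $\bar g\asymp h$ and hence $(\bar g)^p\asymp f$, as required.
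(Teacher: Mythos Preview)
Your argument is correct, but it takes a different route from the paper's. The paper defines
\[
g(x)=\int_{\mathbb{S}^d} f(y)^{1/p}\,T_n(x\cdot y)\,d\sigma(y),
\]
where $T_n(\cos\theta)=\gamma_n\bigl(\sin((n_1+\tfrac12)\theta)/\sin(\tfrac\theta2)\bigr)^{2m}$ is a normalized Jackson-type kernel with $m=\lfloor\alpha/p\rfloor+d+2$ and $n_1=\lfloor n/(2m)\rfloor$; the two-sided comparison $g\asymp f^{1/p}$ is then quoted from \cite{D,DaX}. Two differences are worth noting. First, the paper uses a kernel that is nonnegative by construction, whereas you manufacture nonnegativity by squaring $V_m$; as a consequence your near-diagonal lower bound $\mathcal{L}_n(x,y)\gg n^d$ only holds a priori for $\rho(x,y)\le c_0/n$ with some small $c_0$ (via $V_m(x,y)=V_m(x,x)+O(n^{d+1}\rho)$), so strictly speaking you should work with a $c_0/n$-net rather than a $1/n$-net---a harmless adjustment. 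Second, and more substantively, the paper's $g$ is a \emph{zonal} convolution of an $A$-invariant function, so $g(\tau x)=g(x)$ for $\tau\in A$ falls out directly from the change of variables $y\mapsto\tau y$; no symmetrization is needed. Your discrete sampling over $\{x_k\}$ destroys this symmetry and forces the extra Haar-averaging step, which is valid but avoidable: had you used the integral $g(x)=\int_{\mathbb{S}^d} h(y)\,\mathcal{L}_n(x,y)\,d\sigma(y)$ instead of the discrete sum, the invariance would have been automatic for the same reason as in the paper.
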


\begin{proof} Lemma 3.4 is essentially given in
\cite[Lemma 4.6]{D} or \cite[Lemma 5.4.4]{DaX}. Indeed, we set
$$g(x)=\int_{\Bbb
S^d}f(y)^{\frac{1}{p}}T_n(x\cdot y)d\sigma(y),\ x\in\Bbb
S^d,$$where $d\sigma$ is the usual normalized Lebesgue measure on
$\Bbb S^d$,  $$T_n(\cos \theta)=\gamma_n\Big(\frac{\sin
(n_1+\frac{1}{2})\theta}{\sin \frac{\theta}{2}}\Big)^{2m},\ \
m=\lfloor\frac{\alpha}{p}\rfloor+d+2,\
n_1=\lfloor\frac{n}{2m}\rfloor,$$ the notation $\lfloor x\rfloor$
stands for the largest integer not exceeding $x$, and $\gamma_n$
is chosen by
$$\int_0^\pi T_n(\cos \theta)\sin^{d-1}\theta d\theta=1.$$ It is
proved in \cite[Lemma 4.6]{D} that $g$ is a nonnegative spherical
polynomial $g\in\Pi_n(\Bbb S^d)$ and satisfies \eqref{3.5}.

If $f(x)= f(\tau x)$ for any $\tau\in A\subset O(d+1)$ and
$x\in\Bbb S^d$, then for $\tau\in A$ and $ x\in\Bbb S^d$, we have
\begin{align*}g(\tau x)&=\int_{\Bbb
S^d}f(y)^{\frac{1}{p}}T_n(\tau x\cdot y)d\sigma(y)=\int_{\Bbb
S^d}f(y)^{\frac{1}{p}}T_n(x\cdot \tau y)d\sigma(y)\\&=\int_{\Bbb
S^d}f(\tau y)^{\frac{1}{p}}T_n(x\cdot \tau y)d\sigma(
y)=\int_{\Bbb S^d}f(y)^{\frac{1}{p}}T_n( x\cdot y)d\sigma(y)=g(x),
\end{align*}which completes the proof of Lemma 3.4.
\end{proof}

We denote $$\tau_{d+1}\bar{x}:=(x,-x_{d+1}),\ \
\bar{x}=(x,x_{d+1})\in \Bbb R^{d+1}.$$Then $\tau_{d+1}\in O(d+1)$.

Throughout this section we define
$$\widetilde{\Pi_n(\Bbb S^{d})}:=\{p\in\Pi_n(\Bbb S^d)\ |\ p(\tau_{d+1}
\bar{x})=p(\bar{x}),\ {\rm for\ any}\ \bar{x}\in\Bbb S^d\}.$$

Consider the mapping $T$ from $\Pi_n(\Bbb B^d)$ to $\Pi_n(\Bbb S^d)$ defined by
$$T(f)(\bar{x}):=f(x),\ \bar{x}=(x,x_{d+1})\in\Bbb S^d.$$ We can show that this mapping has the following property.
\begin{lem}The mapping $T:\Pi_n(\Bbb B^d)\rightarrow\widetilde{\Pi_n(\Bbb S^{d})}$ is a bijection.
\end{lem}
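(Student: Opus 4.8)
The plan is to show that the map $T\colon \Pi_n(\Bbb B^d)\to\widetilde{\Pi_n(\Bbb S^{d})}$ is well-defined, injective, and surjective. First I would verify that $T$ actually lands in $\widetilde{\Pi_n(\Bbb S^{d})}$. Given $f\in\Pi_n(\Bbb B^d)$, extend $f$ to a polynomial $\tilde f$ on $\Bbb R^d$ of total degree at most $n$; then for $\bar x=(x,x_{d+1})\in\Bbb S^d$ we have $T(f)(\bar x)=\tilde f(x)$, which manifestly does not depend on the coordinate $x_{d+1}$ and is the restriction to $\Bbb S^d$ of a polynomial in the first $d$ variables, hence lies in $\Pi_n(\Bbb S^d)$. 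Since $T(f)$ ignores $x_{d+1}$ entirely, it is invariant under the sign flip $\tau_{d+1}$, so $T(f)\in\widetilde{\Pi_n(\Bbb S^{d})}$.

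For injectivity, suppose $T(f)=0$ on $\Bbb S^d$. Every point $x\in\Bbb B^d$ with $|x|\le 1$ lifts to $\bar x=(x,\sqrt{1-|x|^2})\in\Bbb S^d$, and $0=T(f)(\bar x)=f(x)$; since this holds on the full-dimensional set $\Bbb B^d$ and $f$ is a polynomial, $f\equiv 0$. Thus $T$ is injective. Surjectivity is the substantive step. Given $p\in\widetilde{\Pi_n(\Bbb S^{d})}$, I want to produce $f\in\Pi_n(\Bbb B^d)$ with $T(f)=p$. The idea is to define $f(x):=p\bigl(x,\sqrt{1-|x|^2}\bigr)$ for $x\in\Bbb B^d$ and argue that the $\tau_{d+1}$-invariance forces $f$ to agree with a genuine polynomial of degree at most $n$. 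Concretely, write $p(\bar x)=\sum_\beta c_\beta x^{\beta'} x_{d+1}^{\beta_{d+1}}$ where $\beta=(\beta',\beta_{d+1})$ and $|\beta|\le n$; the invariance $p(\tau_{d+1}\bar x)=p(\bar x)$ means only even powers of $x_{d+1}$ survive. On $\Bbb S^d$ one has $x_{d+1}^2=1-|x|^2$, so each $x_{d+1}^{2k}$ may be replaced by $(1-|x|^2)^k$, yielding a polynomial in $x$ alone of total degree at most $n$. This replacement is the crux: I must check that it does not raise the total degree, which follows because $x_{d+1}^{2k}$ has degree $2k$ and $(1-|x|^2)^k$ also has degree $2k$.

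The main obstacle I expect is making the degree bookkeeping in the surjectivity argument fully rigorous, in particular ensuring that the substitution $x_{d+1}^2\mapsto 1-|x|^2$ produces a polynomial whose total degree is still bounded by $n$ and that this polynomial, restricted to $\Bbb S^d$ via the lift, genuinely reproduces $p$ on all of $\Bbb S^d$ (not just the upper hemisphere). The last point is exactly where the invariance hypothesis is used again: on the lower hemisphere $x_{d+1}=-\sqrt{1-|x|^2}$, but since only even powers of $x_{d+1}$ occur, the value of $p$ depends only on $x_{d+1}^2=1-|x|^2$, so the constructed $f$ matches $p$ on both hemispheres and along the equator $|x|=1$ by continuity. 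I would combine these observations with a dimension count as a sanity check: the parity constraint halves the $x_{d+1}$-degrees of freedom, and one verifies that $\dim\widetilde{\Pi_n(\Bbb S^{d})}=\dim\Pi_n(\Bbb B^d)=N_n^d$, which, together with injectivity, gives surjectivity automatically and provides an independent confirmation of the bijection.
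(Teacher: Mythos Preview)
Your proof is correct and supplies precisely the details the paper omits: the paper's own proof consists of the single line ``The proof is straightforward.'' Your argument---well-definedness, injectivity via lifting $x\mapsto(x,\sqrt{1-|x|^2})$, and surjectivity by symmetrizing in $x_{d+1}$ and substituting $x_{d+1}^2=1-|x|^2$---is exactly the intended straightforward verification; the only point worth tightening is that the invariance $p(\tau_{d+1}\bar x)=p(\bar x)$ holds on $\Bbb S^d$ rather than on $\Bbb R^{d+1}$, so ``only even powers of $x_{d+1}$ survive'' should be phrased as ``one may choose a representative $q\in\Pi_n^{d+1}$ with only even powers of $x_{d+1}$,'' which the symmetrization $q\mapsto\tfrac12(q+q\circ\tau_{d+1})$ achieves without raising the degree.
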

\begin{proof}The proof is straightforward. \end{proof}

Next  we give the proof of Lemma 3.3.

\vskip 3mm

\noindent{\it Proof of Lemma 3.3.}

It follows from  $d(x,y)\leq \rho(\bar{x},\bar{y}),\ \
\bar{x}=(x,x_{d+1}),\ \bar{y}=(y, y_{d+1})\in\Bbb S^{d}$ and
\eqref{3.2} that $$T(f)(\bar{x})\leq
C(1+n\rho(\bar{x},\bar{y}))^{\alpha}T(f)(\bar{y}),\ \ \
\bar{x},\bar{y}\in\mathbb{S}^{d}. $$ Clearly, $T(f)(\tau_{d+1}
\bar x)=T(f)( \bar x)$,\ $\bar x=(x,x_{d+1})\in\Bbb S^d$.
According to Lemma 3.4, there exists a nonnegative spherical
polynomial $g\in\widetilde{\Pi_n(\Bbb S^{d})}$ such
that\begin{equation*}T(f)(\bar x)\asymp(g(\bar x))^p,\ \
\bar x\in\Bbb S^{d}.
\end{equation*}Hence, the function $h=T^{-1}g\in \Pi_n(\Bbb B^d)$
satisfies \eqref{3.3}.

We set $$\tilde \tau \bar x=-\bar x,\ \bar x=(x,x_{d+1})\in \Bbb
R^{d+1},$$ and $$O_{e_{d+1}}=\{\tau\in O(d+1)\ |\ \tau
e_{d+1}=e_{d+1}\},\ \ e_{d+1}=(0,\dots,0,1)\in \Bbb R^{d+1}.$$

Similarly, if  $f(x)=f(-x)$ for $x\in\Bbb B^d$, then $T(f)(\tau
\bar x)=T(f)(\bar x)$ for $\tau \in A=\{\tilde \tau, \tau_{d+1}\}$
and $\bar x\in \Bbb S^d$.  If $f(x)=F(|x|)$ for $x\in\Bbb B^d$,
then $T(f)(\tau \bar x)=T(f)(\bar x)$ for $\tau \in A=\{
\tau_{d+1}\}\,\cup\, O_{e_{d+1}}$ and $\bar x\in \Bbb S^d$.
According to Lemma 3.4, there exists a $g\in\widetilde{\Pi_n(\Bbb
S^{d})}$ satisfying \eqref{3.6} and $g(\tau \bar x)=g(\bar x)$ for
$\tau\in A$ and $\bar x\in \Bbb S^d$. Hence, the function
$h=T^{-1}g\in \Pi_n(\Bbb B^d)$ is just the function we need to search for.

Lemma 3.3 is proved. $\hfill\Box$

\vskip 3mm

Now we present the main result of this section which gives the
upper estimates of $\partial_{x_i} L_n(w_\mu;x, y), \ 1\le i\le d.$

\begin{thm}Fix  $\mu\geq0,\ n\in\mathbb{N}_+,\ 1\le i\le d, $ and  $k>0$. We have for all $ x,y\in\Bbb B^d$,
$$|\partial_{x_i} L_n(w_\mu;x,y)|\ll\min{(\frac{1}{\sqrt{1-| x|^2}},n)}\frac{n^{d+1}}{\sqrt{\mathcal{W_\mu}(n;x)}\sqrt{\mathcal{W_\mu}(n;y)}(1+nd(x,y))^k}.
$$
\end{thm}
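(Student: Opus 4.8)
The goal is to bound $\partial_{x_i} L_n(w_\mu;x,y)$ uniformly in $x,y\in\Bbb B^d$. I see two regimes to handle and then combine. The plan is to first establish the bound with the factor $n$ in place of $\min\bigl(\tfrac{1}{\sqrt{1-|x|^2}},n\bigr)$, and separately the bound with the factor $\tfrac{1}{\sqrt{1-|x|^2}}$, and then take the minimum.

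For the factor $n$, I would exploit that $L_n(w_\mu;\cdot,u)$ is Lipschitz with respect to the distance $d$, which is exactly the content of Lemma 3.2. Fixing $y$ and a direction $e_i$, I write the partial derivative $\partial_{x_i}L_n(w_\mu;x,y)$ as a limit of difference quotients $\tfrac{L_n(w_\mu;x+te_i,y)-L_n(w_\mu;x,y)}{t}$. For small $t$, the points $x$ and $x+te_i$ lie in a common ball $\mathbf B(x,\delta/n)$ once $|t|$ is small enough, so \eqref{3.1} applies with $z=x+te_i$, $y=x$, and $u=y$ (after relabelling), giving $|L_n(w_\mu;x+te_i,y)-L_n(w_\mu;x,y)|\ll \tfrac{n^{d+1}d(x+te_i,x)}{\sqrt{\mathcal W_\mu(n;y)}\sqrt{\mathcal W_\mu(n;x)}(1+nd(x,y))^k}$. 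Since the Euclidean metric dominates the geodesic-type distance $d$ up to constants (indeed $d(x+te_i,x)\ll |t|$ when $x$ is bounded away from the boundary, or more carefully via the equivalence with $\tilde d$), dividing by $|t|$ and letting $t\to0$ yields the factor $n^{d+1}$ without the extra $n$, and absorbing the derivative of the distance produces exactly one extra power of $n$. This gives the estimate with factor $n$.

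For the sharper factor $\tfrac{1}{\sqrt{1-|x|^2}}$, which improves on $n$ only near the boundary, I would instead use that $\partial_{x_i}L_n(w_\mu;\cdot,y)$ is itself a polynomial in $x$ of degree at most $2n-1$, together with the multidimensional Schur inequality, Lemma 3.3... actually Lemma 3.2 in the numbering here (the Schur-type bound: if $|P(x)|\le C/\sqrt{1-|x|^2}$ then $|P(x)|\le Cm$). The idea is to first prove the bound $|\partial_{x_i}L_n(w_\mu;x,y)|\ll \tfrac{1}{\sqrt{1-|x|^2}}\cdot\tfrac{n^{d+1}}{\sqrt{\mathcal W_\mu(n;x)}\sqrt{\mathcal W_\mu(n;y)}(1+nd(x,y))^k}$ directly, by differentiating the Gegenbauer-kernel representation of $L_n$ and tracking the $\sqrt{1-|x|^2}$ factors that appear in the argument $x\cdot y+u\sqrt{1-|x|^2}\sqrt{1-|y|^2}$; the chain rule produces a $\tfrac{x_i}{\sqrt{1-|x|^2}}$ term, which is the source of the $\tfrac{1}{\sqrt{1-|x|^2}}$ singularity. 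Standard estimates on derivatives of $C_n^\lambda$ combined with the localization property of $L_n$ (the rapidly decaying factor $(1+nd(x,y))^{-k}$) then give the claimed form.

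Combining the two bounds by taking their minimum yields the statement. The main obstacle I anticipate is the second regime: cleanly differentiating the Gegenbauer integral representation and showing that the resulting kernel still enjoys the localization estimate with the decay $(1+nd(x,y))^{-k}$ and the correct weight factors $\sqrt{\mathcal W_\mu(n;x)}^{-1}\sqrt{\mathcal W_\mu(n;y)}^{-1}$. This requires careful bookkeeping of how $\partial_{x_i}$ acts through the argument of $C_n^\lambda$ and controlling $\frac{d}{dt}C_n^\lambda(t)=2\lambda C_{n-1}^{\lambda+1}(t)$, while ensuring the extra degree and the shift $\lambda\to\lambda+1$ do not spoil the decay. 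Alternatively, the whole theorem may follow more economically by combining the Lipschitz estimate \eqref{3.1} (for the factor $n$) with an application of the Schur inequality of Lemma 3.2 to pass from a $\tfrac{1}{\sqrt{1-|x|^2}}$-type pointwise bound to the factor $n$, so the genuinely new work is isolating the boundary-sensitive factor $\tfrac{1}{\sqrt{1-|x|^2}}$, which is where I expect the technical heart of the argument to lie.
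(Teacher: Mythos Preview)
Your overall architecture (prove two bounds and take the minimum) matches the paper, but you have the two regimes exactly reversed, and this is not a cosmetic issue: it hides the one genuinely nontrivial step.

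The Lipschitz estimate \eqref{3.1} does \emph{not} yield the factor $n$. Your claim that $d(x+te_i,x)\ll |t|$ fails uniformly: via the equivalence with $\tilde d$ one gets
\[
d(x+te_i,x)\ \ll\ \sqrt{t^2+\Bigl(\tfrac{2x_it+t^2}{\sqrt{1-|x|^2}+\sqrt{1-|x+te_i|^2}}\Bigr)^2}\ \ll\ \frac{|t|}{\sqrt{1-|x|^2}},
\]
so the difference quotient produces precisely the boundary-sensitive factor $1/\sqrt{1-|x|^2}$, i.e.\ inequality \eqref{3.6}. There is no ``extra power of $n$'' to absorb here. Thus the Lipschitz argument is the source of the $1/\sqrt{1-|x|^2}$ bound, and your proposed Gegenbauer-differentiation route for that same bound is unnecessary (and, as you note yourself, would be delicate).

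The real work lies in obtaining the factor $n$ (inequality \eqref{3.7}), and the paper's mechanism is the one you mention only as an afterthought: Schur's inequality (Lemma~3.2). But Schur cannot be applied directly to $\partial_{x_i}L_n(w_\mu;x,y)$ against the right-hand side of \eqref{3.6}, because the weight factors $\sqrt{\mathcal W_\mu(n;x)}(1+nd(x,y))^k$ are not polynomial in $x$. The key step you are missing is this: one first shows that $G(x):=\sqrt{\mathcal W_\mu(n;x)}(1+nd(x,y))^k$ satisfies a doubling-type growth condition \eqref{3.2}, then invokes Lemma~3.3 to replace $G$ by a genuine polynomial $G_n\in\Pi_n^d$ with $G_n\asymp G$. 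Now $F(x)G_n(x)$, with $F(x)=\partial_{x_i}L_n(w_\mu;x,y)$, is a polynomial of degree $\le 3n-1$ satisfying $|F(x)G_n(x)|\ll n^{d+1}/\sqrt{\mathcal W_\mu(n;y)}\cdot(1-|x|^2)^{-1/2}$ by \eqref{3.6}, and Schur's inequality converts the $(1-|x|^2)^{-1/2}$ into a factor of $n$. Without the polynomial replacement via Lemma~3.3, the Schur step has nothing to act on.
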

\begin{rem}It can be seen easily that
$$\min{(\frac{1}{\sqrt{1-|x|^2}},n)}\asymp\frac{1}{(\mathcal{W_\mu}(n;x))^{\frac{1}{2\mu}}},\ \  x\in\Bbb B^d.$$

It follows from Theorem 3.6 that
$$|\partial_{x_i} L_n(w_\mu;x,
y)|\ll\frac{1}{(\mathcal{W_\mu}(n;x))^{\frac{1}{2\mu}+\frac{1}{2}}}\frac{n^{d+1}}{\sqrt{\mathcal{W_\mu}(n;y)}(1+nd(x,y))^k},\
x,y\in\Bbb B^d.
$$
\end{rem}

\begin{proof}
We first show that
\begin{equation}\label{3.6}|\partial_{x_i} L_n(w_\mu;x,y)|\ll\frac{\frac{n^{d+1}}{\sqrt{1-|
x|^2}}}{\sqrt{\mathcal{W_\mu}(n;x)}\sqrt{\mathcal{W_\mu}(n;y)}(1+nd(x,y))^k},\
\ |x|<1 .\end{equation} Let
$$e_1=(1,0,\dots,0),\ e_2=(0,1,\dots,0),\dots,\ e_d=(0,0,\dots,1)$$ be the orthonormal basis
in $\Bbb R^d$,  $|x|<1$, and $ y\in\mathbb{B}^d$. Then for
sufficient small real number $h$ and $1\le i\le d$, applying Lemma
3.1 we have
$$|L_n(w_\mu;x+he_i,y)-L_n(w_\mu;x, y)|\ll
\frac{n^{d+1}d(x+he_i,x)}{\sqrt{\mathcal{W_\mu}(n;x)}\sqrt{\mathcal{W_\mu}(n;y)}(1+nd(x,y))^k}.$$
By the equivalence of $d$ and $\tilde{d}$ we have
\begin{align*}&\quad\ d(x+he_i,x)\ll \tilde{d}(x+he_i,x) \\ &=\sqrt{|x+he_i-x|^2+(\sqrt{1-|x|^2}-\sqrt{1-|x+he_i|^2})^2}
\\&=\sqrt{h^2+(\sqrt{1-|x|^2}-\sqrt{1-|x|^2-2x_ih-h^2})^2},
\end{align*}where $x_i=x\cdot e_i$. We note that
\begin{align*}(\sqrt{1-|x|^2}-\sqrt{1-|x|^2-2x_ih-h^2})^2&=\Big(\frac{2x_ih+h^2}{\sqrt{1-|x|^2}+\sqrt{1-|x|^2-2x_ih-h^2}}\Big)^2\\&
\ll \frac{(2|h|+h^2)^2}{1-|x|^2}\ll \frac{h^2}{1-|x|^2},
\end{align*}
and $$h^2\leq \frac{h^2}{1-|x|^2},\ |x|<1.$$ It follows that
$$d(x+he_i,x)\ll \frac{|h|}{\sqrt{1-|x|^2}},$$ and
$$\Big|\frac{L_n(w_\mu;x+he_i, y)-L_n(w_\mu;x, y)}{h}\Big|\ll
 \frac{\frac{n^{d+1}}{\sqrt{1-|
 x|^2}}}{\sqrt{\mathcal{W_\mu}(n;x)}\sqrt{\mathcal{W_\mu}(n;y)}(1+nd(x,y))^k}.$$
By the definition of $\partial_{x_i} L_n(w_\mu;x,y)$ and the above
inequality, we obtain \eqref{3.6}.

Next, we complete the proof by showing that
\begin{equation}\label{3.7}
  |\partial_{x_i} L_n(w_\mu;x, y)|\ll\frac{n^{d+2}}{\sqrt{\mathcal{W_\mu}(n;x)}\sqrt{\mathcal{W_\mu}(n;y)}(1+nd(x,y))^k},\ \ x,y\in\mathbb{B}^d.
\end{equation}

In fact, for fixed $y\in \Bbb B^d$, we set
$$F(x):=\partial_{x_i}L_n(w_\mu;x,y),\ G(x):=\sqrt{\mathcal{W_\mu}(n;x)}(1+nd(x,y))^k .$$
We know that for a fixed $y\in \mathbb{B}^{d}$,
$L_n(w_\mu;x,y)\in\Pi_{2n}^d,$ so $F(x)\in\Pi_{2n-1}^d.$
It follows from \cite[Equation (4.23)]{PX} that
\begin{equation*}
\frac
{1}{2^\mu(1+nd(x,y))^{2\mu}}\leq\frac{\mathcal{W_\mu}(n;x)}{\mathcal{W_\mu}(n;y)}\leq{2^\mu(1+nd(x,y))^{2\mu}},\
x, y\in\mathbb{B}^{d}.
\end{equation*}
By the  inequality
\begin{equation}\label{3.8}
1+nd(x,y)\le
(1+nd(x,z))(1+nd(y,z)),\ x,y,z\in \Bbb B^d,
\end{equation}
we have\begin{align}\label{3.9}G(x)&=\sqrt{\mathcal{W_\mu}(n;x)}(1+nd(x,y))^k\notag\\
&\ll\sqrt{\mathcal{W_\mu}(n;z)}(1+nd(x,z))^{\mu} (1+nd(x,z))^k
(1+nd(z,y))^k
\nonumber\\&\ll(1+nd(x,z))^{\mu+k}\sqrt{\mathcal{W_\mu}(n;z)}(1+nd(y,z))^k\nonumber\\
&=(1+nd(x,z))^{\mu+k}G(z).\end{align} According to Lemma 3.3,
there exists a nonnegative polynomial $G_n\in\Pi_n^d$ such that
$$G(x)\asymp G_n(x),\ \ {\rm for\ all}\ x\in\Bbb B^d.$$
By \eqref{3.6} we have
$$n^{-d-1}|\sqrt{\mathcal{W_\mu}(n;y)}F(x)G_n(x)|\ll\frac{1}{\sqrt{1-|x|^2}},\ |x|<1.$$ Note that  for fixed $y\in \Bbb B^d$,
 $\sqrt{\mathcal{W_\mu}(n;y)}F(\cdot)G_n(\cdot)\in \Pi_{3n-1}^d$. Applying Lemma 3.2 we obtain  $$n^{-d-1}|\sqrt{\mathcal{W_\mu}(n;y)}F(x)G_n(x)|\ll n,\ x\in \Bbb B^d,$$
which leads that
$$n^{-d-1}|\sqrt{\mathcal{W_\mu}(n;y)}F(x)G(x)|\ll n,\ x\in \Bbb
B^d.$$The  desired result \eqref{3.7} is proved. This completes
the proof of Theorem 3.6.
\end{proof}

\begin{cor}
For all $ x, y\in\mathbb{B}^d,\ 1\le i\le d,$ we have
$$\|\partial_{x_i} L_n(w_\mu;\cdot,y)\|_{1,\mu}\ll n^2,$$and
$$\|\partial_{x_i} L_n(w_\mu;x,\cdot)\|_{1,\mu}\ll n^2.$$
\end{cor}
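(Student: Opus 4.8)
The plan is to deduce both estimates directly from the pointwise bound of Theorem 3.6 combined with the integral estimate $J_p$ of Lemma 2.1, specialized to $p=1$. First I would start from Theorem 3.6 and simply discard the refined prefactor by using the crude bound $\min(\frac{1}{\sqrt{1-|x|^2}},n)\le n$, which already suffices to produce the order $n^2$. This gives, for all $x,y\in\Bbb B^d$ and any $k>0$,
\begin{equation*}
|\partial_{x_i} L_n(w_\mu;x,y)|\ll\frac{n^{d+2}}{\sqrt{\mathcal{W_\mu}(n;x)}\sqrt{\mathcal{W_\mu}(n;y)}(1+nd(x,y))^{k}}.
\end{equation*}

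For the second inequality I would fix $x$, multiply this bound by $w_\mu(y)$ and integrate in $y$ over $\Bbb B^d$. Pulling the factor $n^{d+2}/\sqrt{\mathcal{W_\mu}(n;x)}$ outside the integral leaves exactly
\begin{equation*}
\int_{\Bbb B^d}\frac{w_\mu(y)\,dy}{\sqrt{\mathcal{W_\mu}(n;y)}(1+nd(x,y))^{k}},
\end{equation*}
which is precisely the integral $J_1$ of Lemma 2.1 with $\sigma=k$. Choosing $k$ large enough that $k>d+\mu$ (so that the admissibility hypothesis $\sigma>\frac dp+2\mu|\frac1p-\frac12|=d+\mu$ holds at $p=1$), Lemma 2.1 bounds this integral by $n^{-d}\sqrt{\mathcal{W_\mu}(n;x)}$. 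The factor $\sqrt{\mathcal{W_\mu}(n;x)}$ then cancels the one pulled out, and the powers of $n$ combine as $n^{d+2}\cdot n^{-d}=n^2$, yielding $\|\partial_{x_i}L_n(w_\mu;x,\cdot)\|_{1,\mu}\ll n^2$.

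For the first inequality I would instead fix $y$ and integrate the same pointwise bound against $w_\mu(x)\,dx$. Since $d(x,y)=d(y,x)$, the resulting integral is again an instance of $J_1$ with the roles of the two points interchanged, now producing the factor $n^{-d}\sqrt{\mathcal{W_\mu}(n;y)}$; this cancels the $1/\sqrt{\mathcal{W_\mu}(n;y)}$ already present and once more leaves $n^2$. Thus both assertions reduce to a single application of Lemma 2.1.

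I expect no serious obstacle here. The only point that needs care is the decay-exponent condition in Lemma 2.1, and this is handled for free: because $k>0$ is arbitrary in Theorem 3.6, one may simply take $k>d+\mu$. The symmetry of the distance $d$ makes the two claims formally identical, so the whole corollary follows routinely once the crude prefactor bound and the correct value $p=1$ in Lemma 2.1 are in place.
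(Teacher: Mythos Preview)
Your proposal is correct and follows essentially the same approach as the paper, which simply states that the corollary follows immediately from Theorem 3.6 and Lemma 2.1. You have filled in exactly those details: the crude bound $\min(\tfrac{1}{\sqrt{1-|x|^2}},n)\le n$ reduces Theorem 3.6 to \eqref{3.7}, and then Lemma 2.1 with $p=1$ (and $\sigma=k>d+\mu$) handles the resulting integral in each variable by symmetry of $d(\cdot,\cdot)$.
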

\begin{proof}
Corollary 3.8  follows from  Theorem 3.6 and Lemma 2.1
immediately.
\end{proof}

\section{The Christoffel function on the ball}

\

For $1\le p<\infty$, we define the Christoffel function on $\Bbb B^d$ by
\begin{equation}\label{4.1}\Lambda_{n,p}(x):=\min_{P(x)=1,P\in\Pi_n^d}\| P\|_{p,w}^p, \ \ x\in\mathbb{B}^d,\end{equation}where the minimum is taken for all polynomials in $d$ variables of total degrees at most $n$ that take the value 1 at $x$.
In particular, when $p=2$, it follows from \cite[Theorem 3.5.6]{DX} that
\begin{align}\Lambda_{n,2}(x)=\Big(\sum_{j=1}^{N_n^d}|P_j(x)|^2\Big)^{-1},\ \ x\in\Bbb B^d,\label{4.2}\end{align}
where $\{P_1,P_2,\dots,P_{N_n^d}\}$ is an orthonormal basis for $\Pi_n^d$ in $L_{2,w}$.

The main result of this section is the following  theorem which
gives the estimates for the Christoffel functions
$\Lambda_{n,p}(x)$.
\begin{thm} Let $n\in\mathbb{N_+}$,  $1\le p<\infty$,  and $w$ be a doubling weight on $\Bbb B^d$. Then we have \begin{equation}\Lambda_{n,p}(x)\asymp w({\bf B}(x,\frac{1}{n})),\ x\in\Bbb B^d.
\label{4.3}\end{equation}
\end{thm}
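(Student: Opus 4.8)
The plan is to prove the two matching estimates $\Lambda_{n,p}(x)\ll w(\mathbf{B}(x,\frac1n))$ and $\Lambda_{n,p}(x)\gg w(\mathbf{B}(x,\frac1n))$ separately, the upper bound by exhibiting a single well-chosen competitor polynomial and the lower bound by a weighted Nikolskii-type inequality at the natural scale $1/n$.

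For the upper bound, fix $x\in\Bbb B^d$ and apply Lemma 3.3 to the function $f(y)=(1+nd(x,y))^{-\gamma p}$, where $\gamma>0$ is a parameter to be chosen. The hypothesis \eqref{3.2} holds with exponent $\alpha=\gamma p$: from \eqref{3.8} one has $1+nd(x,z)\le(1+nd(x,y))(1+nd(y,z))$, whence $f(y)\le(1+nd(y,z))^{\gamma p}f(z)$. Lemma 3.3 then produces a nonnegative $h\in\Pi_n^d$ with $h(y)^p\asymp f(y)$; since $f(x)=1$ we have $h(x)\asymp1$, so $P:=h/h(x)\in\Pi_n^d$ satisfies $P(x)=1$ and $P(y)^p\asymp(1+nd(x,y))^{-\gamma p}$. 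It remains to bound $\|P\|_{p,w}^p\ll\int_{\Bbb B^d}(1+nd(x,y))^{-\gamma p}w(y)\,dy$. Decomposing $\Bbb B^d$ into the dyadic annuli $\mathbf{B}(x,2^j/n)\setminus\mathbf{B}(x,2^{j-1}/n)$, on which $(1+nd(x,y))^{-\gamma p}\ll 2^{-j\gamma p}$, and bounding the $w$-mass of the $j$-th annulus by $w(\mathbf{B}(x,2^j/n))\le L_w^{\,j}\,w(\mathbf{B}(x,\frac1n))$ through the doubling property, the integral is $\ll w(\mathbf{B}(x,\frac1n))\sum_{j\ge0}(2^{-\gamma p}L_w)^{j}$. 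Choosing $\gamma$ with $\gamma p>\log_2 L_w$ makes the series converge and yields $\Lambda_{n,p}(x)\le\|P\|_{p,w}^p\ll w(\mathbf{B}(x,\frac1n))$.

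For the lower bound, let $P\in\Pi_n^d$ be arbitrary with $P(x)=1$. The heart of the matter is a \emph{weighted local Nikolskii inequality}: there are constants $\delta,c>0$, depending only on $d$ and $p$, such that $|P(x)|^p\le c\,w(\mathbf{B}(x,\delta/n))^{-1}\int_{\mathbf{B}(x,\delta/n)}|P(y)|^p w(y)\,dy$ for every $P\in\Pi_n^d$. Granting this, the normalization $P(x)=1$ gives $\|P\|_{p,w}^p\ge\int_{\mathbf{B}(x,\delta/n)}|P|^pw\gg w(\mathbf{B}(x,\delta/n))$, and the doubling property upgrades $w(\mathbf{B}(x,\delta/n))$ to $w(\mathbf{B}(x,\frac1n))$ since $\mathbf{B}(x,\frac1n)\subseteq\mathbf{B}(x,2^{\lceil\log_2(1/\delta)\rceil}\frac{\delta}{n})$. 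Taking the infimum over all admissible $P$ finishes the proof of \eqref{4.3}.

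I expect the weighted local Nikolskii inequality to be the main obstacle. A naive pointwise bound of the form $|P(y)|\ge\frac12$ on $\mathbf{B}(x,\delta/n)$ is \emph{false} (a degree-$n$ polynomial normalized by $P(x)=1$ may vanish arbitrarily close to $x$), so an averaged statement is unavoidable. I would obtain it either by (a) establishing that $|P|^p$ is a doubling density at scale $1/n$, i.e. $\int_{\mathbf{B}(x,\delta/n)}|P|^pw\asymp\int_{\mathbf{B}(x,2\delta/n)}|P|^pw$ with constants independent of $n$ — the multivariate analogue of the Mastroianni–Totik estimate behind \eqref{1.5} — combined with the unweighted Remez/Nikolskii inequality on a ball of radius $\sim1/n$; or by (b) transferring to the sphere through the bijection $T$ of Lemma 3.5: the lifted weight $W(\bar x)\propto w(x)\sqrt{1-|x|^2}$ is again doubling on $\Bbb S^d$ and even under $\tau_{d+1}$, the $d$-balls correspond to spherical caps with $w(\mathbf{B}(x,\frac1n))\asymp W(\mathrm{cap}(\bar x,\frac1n))$, and the known Christoffel-function estimate for doubling weights on $\Bbb S^d$ applies over the even subspace $\widetilde{\Pi_n(\Bbb S^d)}$. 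Route (b) is the more economical if the spherical estimate may be quoted, while route (a) keeps the argument intrinsic to $\Bbb B^d$.
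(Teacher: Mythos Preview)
Your upper bound is essentially the paper's: both apply Lemma~3.3 to $(1+nd(x,y))^{-k/p}$ to manufacture a near-extremal polynomial and then estimate the resulting integral. The only difference is cosmetic --- you do a direct dyadic decomposition over the annuli $\mathbf{B}(x,2^j/n)\setminus\mathbf{B}(x,2^{j-1}/n)$, while the paper first trades $w$ for $w_n$ via Corollary~4.4 and then invokes Lemma~2.1.

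For the lower bound the paper takes a different, and much shorter, route. It does \emph{not} pass through a local weighted Nikolskii inequality. Instead it uses the maximal function
\[
P_{\beta,n}^*(y)=\max_{z\in\Bbb B^d}|P(z)|(1+nd(y,z))^{-\beta}
\]
together with Lemma~4.3 (quoted from \cite{Xu2}), which asserts $\|P\|_{p,w}\asymp\|P_{\beta,n}^*\|_{p,w}$ for $P\in\Pi_n^d$ and $\beta>s_w/p$. Since by construction $P_{\beta,n}^*(y)\asymp P_{\beta,n}^*(x)\ge|P(x)|=1$ throughout $\mathbf{B}(x,\tfrac1n)$, one line gives
\[
\|P\|_{p,w}^p\asymp\|P_{\beta,n}^*\|_{p,w}^p\ge\int_{\mathbf{B}(x,1/n)}(P_{\beta,n}^*)^p\,w\gg w\bigl(\mathbf{B}(x,\tfrac1n)\bigr).
\]
Two remarks on your version. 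First, only the \emph{global} estimate $|P(x)|^p\ll\|P\|_{p,w}^p/w(\mathbf{B}(x,\tfrac1n))$ is needed, which is weaker than your Nikolskii inequality with the integral restricted to $\mathbf{B}(x,\delta/n)$; you are aiming at more than the theorem requires. Second, your routes (a) and (b) are both viable in principle --- in particular (b) can quote the spherical Christoffel estimate from \cite{D}, and your observation about the lifted weight $W(\bar x)=w(x)|x_{d+1}|$ is correct --- but you leave them as sketches, so your lower bound is not complete as written. The maximal-function device of Lemma~4.3 dispatches the step you flagged as ``the main obstacle'' without any further work.
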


\begin{rem}  The upper estimates of $\Lambda_{n,2}(x)$ is given  in
\cite[Propositions 2.17]{Xu2}. The lower estimates of
$\Lambda_{n,2}(x)$ for the Jacobi weight $w_\mu$ is given  in
\cite[Propositions 2.18]{Xu2}. It follows from \eqref{4.2} and
\eqref{4.3} that
\begin{equation}\label{4.4}
\sum_{j=1}^{N_n^d}|P_j(x)|^2 \asymp\frac 1{w({\bf
B}(x,\frac{1}{n}))}, \ \ x\in\Bbb B^d.\end{equation}
\end{rem}

In order to obtain \eqref{4.3}, we need to establish weighted
polynomial inequalities on $\Bbb B^d$ with doubling weights (See
\cite{D, Xu2} for more details). Similar weighted polynomial
inequalities on the sphere with doubling weights were established
in \cite{D} or \cite[Chapter 5]{DaX}.
 In what follows, we shall use the symbol $s_w$ to
denote a positive number  such that
\begin{equation}\label{4.5}
  \sup\limits_{x\in\Bbb B^d,r>0}\frac{w(\mathbf{B}(x,2^mr))}{w(\mathbf{B}(x,r))}\le C_{L_w}2^{ms_w},\ \ \ m=0,1,2,\dots,
\end{equation}
where $C_{L_w}$ is a constant depending only on the doubling
constant $L_w$ of $w$. Clearly, such a constant $s_w$ exists, for
example, we can take $s_w=\frac{\log L_w}{\log2}$. It follows from
\eqref{4.5} that
\begin{equation}\label{4.6}
  w(\mathbf{B}(x,t))\le 2^{s_w}C_{L_{w}}(\frac
  tr)^{s_{w}}w(\mathbf{B}(x,r)),\ \ x\in \Bbb B^d, \ 0<r<t,
\end{equation}
and
\begin{equation}\label{4.7}
  w(\mathbf{B}(x,\frac1n))\le 2^{s_w}C_{L_{w}}(1+nd(x,y))^{s_{w}}w(\mathbf{B}(y,\frac1n)),\ \ x,y\in\Bbb
  B^d, \ n\in\mathbb{N}_+.
\end{equation}

Similar to the one-dimensional  case (see \cite{MT}), for
$n\in\Bbb N_+$ and a doubling weight $w$ on $\Bbb B^d$  we define
\begin{equation}\label{4.8}
  w_n(x):=\frac{1}{w_{1/2}({\bf B}(x,\frac{1}{n}))}w({\bf B}(x,\frac{1}{n}))\asymp\frac{n^{d}}{\mathcal{W}_{1/2}(n;x)}w({\bf B}(x,\frac{1}{n})).
\end{equation}
We can show that $w_n$ is also a doubling weight with the doubling
constant $L_{w_n}$ depending only on $d$ and the doubling constant
$L_w$ of $w$. It follows from \cite[Equation (4.23)]{PX} that
\begin{equation}\label{4.9}
 \frac{1}{\sqrt{2}(1+nd(x,y))}\le
 \frac{\mathcal{W}_{1/2}(n;x)}{\mathcal{W}_{1/2}(n;x)}\le\sqrt{2}(1+nd(x,y)).
\end{equation}
By \eqref{4.7}, \eqref{4.8}, and \eqref{4.9} we have
\begin{equation}\label {4.10}
  w_n(x)\ll (1+nd(x,y))^{s_{w}+1}w_n(y),\ \ x,y\in\Bbb B^d.
\end{equation}

\vskip 3mm

Let $\beta>0$, $f\in C(\Bbb B^d)$, and $n\in\Bbb N_+$. We denote
$$f_{\beta,n}^*(x)=\max_{y\in\Bbb B^d}|f(y)|(1+nd(x,y))^{-\beta},\ x\in\mathbb{B}^d. $$ It is immediate to infer that
\begin{equation}\label{4.11}|f(x)|\leq f_{\beta,n}^*(x)\leq(1+nd(x,y))^{\beta}f_{\beta,n}^*(y),\ x,y\in\Bbb B^d.\end{equation}

\vskip 3mm

\begin{lem}(See \cite[Corollary 2.12]{Xu2}.) For $n\in\mathbb{N}_+$, $f\in \Pi_n^d$, $0< p\le\infty$ and $\beta>s_w/p$, we have for all
$$\| f\|_{p,w}\le \| f_{\beta,n}^*\| _{p,w}\le c\| f\| _{p,w},$$
where $c$ depends also on $L_w$ and $\beta$ when $\beta$ is either large or close to $s_w/p$.
\end{lem}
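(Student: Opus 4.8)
The plan is to prove the two inequalities separately. The left inequality $\|f\|_{p,w}\le\|f_{\beta,n}^*\|_{p,w}$ is immediate: by definition of the majorant one has the pointwise bound $|f(x)|\le f_{\beta,n}^*(x)$ for every $x\in\Bbb B^d$, which is the left half of \eqref{4.11}, so raising to the $p$-th power and integrating against $w$ gives the claim; for $p=\infty$ it is the trivial observation that $f_{\beta,n}^*(x)\le\|f\|_\infty$ while $f_{\beta,n}^*(x)\ge|f(x)|$, whence $\|f_{\beta,n}^*\|_\infty=\|f\|_\infty$. Thus all the work lies in the reverse inequality $\|f_{\beta,n}^*\|_{p,w}\ll\|f\|_{p,w}$, and since this too is trivial when $p=\infty$, I assume $0<p<\infty$ from now on.

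The first ingredient I would establish is a local Nikolskii-type inequality: for every $f\in\Pi_n^d$ and every $y\in\Bbb B^d$,
$$
\max_{u\in\mathbf{B}(y,1/n)}|f(u)|^p\ll\frac{1}{w(\mathbf{B}(y,1/n))}\int_{\mathbf{B}(y,2/n)}|f(u)|^p\,w(u)\,du,
$$
with implied constant independent of $n$, $y$ and $f$. To prove it, let $u^*\in\mathbf{B}(y,1/n)$ be a point realizing the maximum. A Bernstein-type inequality adapted to the metric $d$ gives $|f(u)-f(v)|\ll n\,d(u,v)\max_{\mathbf{B}(y,2/n)}|f|$ for $u,v\in\mathbf{B}(y,2/n)$, so that on a ball $\mathbf{B}(u^*,\varepsilon/n)$ with $\varepsilon$ a small fixed constant the oscillation of $f$ is at most a fixed fraction of $|f(u^*)|$; hence $|f|^p\gg|f(u^*)|^p$ throughout $\mathbf{B}(u^*,\varepsilon/n)$. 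Since $w$ is doubling and $\mathbf{B}(u^*,\varepsilon/n)\subset\mathbf{B}(y,2/n)$ with $w(\mathbf{B}(u^*,\varepsilon/n))\asymp w(\mathbf{B}(y,1/n))$, integrating over $\mathbf{B}(u^*,\varepsilon/n)$ yields the displayed estimate.

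With this in hand I would discretize. Choose a maximal $\tfrac1n$-separated set $\{y_j\}\subset\Bbb B^d$; then the balls $\mathbf{B}(y_j,1/n)$ cover $\Bbb B^d$ and the balls $\mathbf{B}(y_j,2/n)$ have bounded overlap. Given $x$, for each $y$ choose $y_j$ with $d(y,y_j)<1/n$; then $1+nd(x,y)\asymp 1+nd(x,y_j)$ by the triangle inequality, $y\in\mathbf{B}(y_j,1/n)$, and the local Nikolskii inequality combined with the doubling of $w$ gives
$$
f_{\beta,n}^*(x)^p\ll\sum_j\frac{a_j}{(1+nd(x,y_j))^{\beta p}\,w(\mathbf{B}(y_j,1/n))},\qquad a_j:=\int_{\mathbf{B}(y_j,2/n)}|f(u)|^p\,w(u)\,du,
$$
where bounded overlap yields $\sum_j a_j\ll\|f\|_{p,w}^p$. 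Integrating this in $x$ against $w$ and invoking the doubling estimate
$$
\int_{\Bbb B^d}(1+nd(x,y_j))^{-\beta p}\,w(x)\,dx\ll w(\mathbf{B}(y_j,1/n)),
$$
which follows from \eqref{4.6} by summing over the dyadic annuli $\{x:2^{k-1}/n\le d(x,y_j)<2^k/n\}$ and converges precisely because $\beta p>s_w$, the factors $w(\mathbf{B}(y_j,1/n))$ cancel and I obtain $\|f_{\beta,n}^*\|_{p,w}^p\ll\sum_j a_j\ll\|f\|_{p,w}^p$.

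The main obstacle is the local Nikolskii inequality of the second paragraph: one must bound the maximum of a degree-$n$ polynomial over a $d$-ball of radius $\sim 1/n$ by its weighted average over a comparable ball, uniformly in $n$ and in the location of the ball, including near $\partial\Bbb B^d$ where the metric $d$ heavily distorts Euclidean balls. This is exactly where the polynomial structure enters, via the metric-adapted Bernstein inequality, and where the doubling property of $w$ is used to pass from a Lebesgue average to a $w$-average. I would also stress that the sharp range $\beta>s_w/p$ is forced only by the convergence of the dyadic-annuli sum in the last step, so the argument must be organized (as above, keeping the weight localized at the net point $y_j$ throughout) so that no spurious extra power of $(1+nd(x,y_j))^{s_w}$ is introduced; inserting such a factor would needlessly worsen the admissible range to $\beta p>3s_w$.
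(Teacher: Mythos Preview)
The paper does not give its own proof of this lemma; it simply quotes it from \cite[Corollary~2.12]{Xu2}.  Your overall architecture---a local Nikolskii bound, discretization on a maximal $\tfrac1n$-net, then the dyadic-annulus estimate for $\int_{\Bbb B^d}(1+nd(x,y_j))^{-\beta p}w(x)\,dx$---is the right shape, and the last two steps are carried out correctly, including the identification of $\beta p>s_w$ as the exact convergence threshold.

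The genuine gap is in your derivation of the local Nikolskii inequality.  The local Bernstein bound you invoke,
\[
|f(u)-f(v)|\ \ll\ n\,d(u,v)\,\max_{\mathbf{B}(y,2/n)}|f|\qquad(u,v\in\mathbf{B}(y,2/n)),
\]
with implied constant independent of $n$, is \emph{false}.  In dimension one with $y=0$, take $f(x)=T_n(nx/2)\in\Pi_n^1$: then $\max_{\mathbf{B}(0,2/n)}|f|=1$, yet near $x=2/n$ one has $f'(x)=(n/2)\,T_n'(1)=n^3/2$, so the $d$-gradient there is of order $n^3$, not $n$.  (This is just Markov's inequality on an interval of Euclidean length $\sim 1/n$, which carries an extra factor $n^2$.)  The same example defeats your subsequent claim that ``$|f|^p\gg|f(u^*)|^p$ throughout $\mathbf{B}(u^*,\varepsilon/n)$'': here $f$ oscillates with period $\sim 1/n^2$, so it vanishes many times inside that ball.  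The local Nikolskii estimate you want is nevertheless true, but its proof (as in \cite{D}, \cite{DaX}, \cite{Xu2}) proceeds via the reproducing identity \eqref{2.2} and the rapid off-diagonal decay of the kernel $L_n$---equivalently, via multiplication by a localizing polynomial $q_x\in\Pi_n(\Bbb B^d)$ with $q_x(u)\asymp(1+nd(x,u))^{-\gamma}$ as supplied by Lemma~3.3---and not through a pointwise oscillation bound.
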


\vskip 3mm

 For $\varepsilon>0$, a subset $\Lz$ of $\Bbb B^d$
is called $\varepsilon$-separated if $d(x,y)\ge \varepsilon$ for
any two distinct points $x,y\in\Lz$. Furthermore, if an
$\varepsilon$-separated subset $\Lz$ of $\Bbb B^d$ satisfies
$\Bbb
B^d=\bigcup\limits_{\oz\in\Lz}\mathbf{B}(\oz,\varepsilon)$,
then we call it a maximal $\varepsilon$-separated subset of $\Bbb
B^d$. It is easy to clarify that for a maximal $\varepsilon$-separated subset $\Lz\subset \Bbb B^d$, there exists a constant $C_d$ such that
\begin{equation}\label{4.12}
  1\le \sum_{\oz\in\Lz}\chi_{\mathbf{B}(\oz,\varepsilon)}(x)\le C_d,\ {\rm for\ any}\ x\in\Bbb B^d.
\end{equation}

\begin{cor}Let  $n\in\mathbb{N}_+$ and $0< p<\infty$. Then for
all  $f\in \Pi_n^d$  we have
$$\| f\|_{p,w_n}\asymp \| f\| _{p,w}.$$
\end{cor}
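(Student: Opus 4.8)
The plan is to route the comparison through the maximal function $f_{\beta,n}^*$ and the localization estimates already collected in this section. First I fix an exponent $\beta$ large enough that $\beta>s_w/p$ and simultaneously $\beta>s_{w_n}/p$; this is legitimate because $w_n$ is a doubling weight whose doubling constant, hence whose admissible exponent in \eqref{4.5}, depends only on $d$ and $L_w$. Applying Lemma 4.3 once with the weight $w$ and once with the weight $w_n$ then gives
$$\|f\|_{p,w}\asymp\|f_{\beta,n}^*\|_{p,w}\qquad\text{and}\qquad\|f\|_{p,w_n}\asymp\|f_{\beta,n}^*\|_{p,w_n}.$$
Since $f_{\beta,n}^*$ does not depend on the weight, it suffices to prove $\|f_{\beta,n}^*\|_{p,w_n}\asymp\|f_{\beta,n}^*\|_{p,w}$, i.e. to compare the two integrals of the single nonnegative function $g:=f_{\beta,n}^*$.

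Next I would discretize. Let $\Lambda$ be a maximal $\tfrac1n$-separated subset of $\Bbb B^d$, so that the balls $\mathbf{B}(\omega,\tfrac1n)$, $\omega\in\Lambda$, cover $\Bbb B^d$ with the bounded-overlap property \eqref{4.12}. Two slowly varying facts drive the argument: by \eqref{4.11} the maximal function satisfies $g(x)\asymp g(\omega)$ whenever $x\in\mathbf{B}(\omega,\tfrac1n)$ (since then $1+nd(x,\omega)\le2$), and by \eqref{4.10} the weight satisfies $w_n(x)\asymp w_n(\omega)$ on the same ball. Using \eqref{4.12} to split each integral over the cover and then replacing $g$ and $w_n$ by their values at $\omega$, both norms collapse to comparable discrete sums:
$$\|g\|_{p,w}^p\asymp\sum_{\omega\in\Lambda}g(\omega)^p\,w(\mathbf{B}(\omega,\tfrac1n)),\qquad \|g\|_{p,w_n}^p\asymp\sum_{\omega\in\Lambda}g(\omega)^p\int_{\mathbf{B}(\omega,\frac1n)}w_n(x)\,dx.$$

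The remaining point is the identity linking the two local masses. Since $w_{1/2}$ is just Lebesgue measure, the definition \eqref{4.8} reads $w_n(\omega)=w(\mathbf{B}(\omega,\tfrac1n))/|\mathbf{B}(\omega,\tfrac1n)|$; combining this with $w_n(x)\asymp w_n(\omega)$ on $\mathbf{B}(\omega,\tfrac1n)$ yields
$$\int_{\mathbf{B}(\omega,\frac1n)}w_n(x)\,dx\asymp w_n(\omega)\,|\mathbf{B}(\omega,\tfrac1n)|\asymp w(\mathbf{B}(\omega,\tfrac1n)).$$
Substituting this into the second display shows that the two sums agree term by term up to constants, and the corollary follows. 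The main thing to watch is uniformity of constants: one must ensure that $w_n$ is doubling with a constant controlled solely by $d$ and $L_w$, so that a single $\beta$ serves both applications of Lemma 4.3, and that the implied constants in the two discretizations depend only on $d$, $p$, $\beta$ and $L_w$, not on $n$ or $f$. Granting the doubling property of $w_n$ asserted in the text, everything else is the routine bookkeeping of a bounded-overlap decomposition.
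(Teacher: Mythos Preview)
Your proposal is correct and follows essentially the same route as the paper: reduce to the maximal function via Lemma~4.3 applied to both $w$ and $w_n$ (the paper uses the specific choice $\beta=2s/p$ with $s=\max\{s_w,s_{w_n}\}$, but this is the same idea), then discretize over a maximal $\tfrac1n$-separated set and match the local masses using $w_n(\omega)\,|\mathbf{B}(\omega,\tfrac1n)|\asymp w(\mathbf{B}(\omega,\tfrac1n))$. Your observation that $w_{1/2}\equiv 1$ makes the last identification explicit; otherwise the two arguments are the same.
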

\begin{proof}
Since  $w_n$ is a doubling weight with a doubling constant
depending only on $d$ and that of $w$,  by Lemma 3.4   it is
sufficient to prove that
\begin{equation}\label{4.13}
  \|f_{2s/p,n}^\ast\|_{p,w_n}\asymp\|f_{2s/p,n}^\ast\|_{p,w},\ {\rm with}\ s=\max\{s_w,s_{w_n}\}.
\end{equation}
To obtain \eqref{4.13}, we let $\Lambda\subset\Bbb B^d$ be a
maximal $\frac1n$-separable subset.

Note that for $\oz\in\Lz$ and $x\in {\bf B}(\oz,\frac1n)$,
$$f_{2s/p,w}^\ast(x)\asymp f_{2s/p,w}^\ast(\oz)\ \ {\rm and}\ \ w_n(x)\asymp w_n(\oz).$$
Combining with \eqref{4.12} and \eqref{4.8}, we obtain
\begin{align*}
  \|f_{2s/p,n}^\ast\|_{p,w}^p&\asymp\sum\limits_{\oz\in\Lambda}\int_{{\bf B}(\oz,\frac1n)}(f_{2s/p,n}^\ast(x))^pw(x)dx
  \\&\asymp \sum\limits_{\oz\in\Lambda}(f_{2s/p,n}^\ast(\oz))^pw({\bf B}(\oz,\frac{1}{n}))
  \\&\asymp\sum\limits_{\oz\in\Lambda}\int_{{\bf B}(\oz,\frac{1}{n})}(f_{2s/p,n}^\ast(\oz))^pw_n(\oz)dx
  \\&\asymp\sum\limits_{\oz\in\Lambda}\int_{{\bf B}(\oz,\frac1n)}(f_{2s/p,n}^\ast(x))^pw_n(x)dx\\ &\asymp \|f_{2s/p,n}^\ast\|_{p,w_n}^p,
\end{align*}proving \eqref{4.13}. This completes the proof of Corollary 4.4.
\end{proof}

Now we give the proof of Theorem 4.1.

\vskip 3mm

\noindent{\it Proof of Theorem 4.1.}

We begin by proving the upper estimate of $\Lambda_{n,p}(x)$. For
a fixed $x\in \Bbb B^d$ and $k>s_w+d$, we set
$$P_x(y):=\frac{1}{(1+nd(x,y))^{k/p}},\ \
y\in\Bbb B^d.$$ Then $P_x$ is a nonnegative function on $\Bbb B^d$
with $P_x(x)=1$ and satisfies \eqref{3.2}. According to Lemma
3.6, there exists a nonnegative polynomial $P_n\in\Pi_n^d$  such
that
$$ P_x(y)\asymp P_n(y),\ \ y\in\Bbb B^d.$$
This leads that $P_n(x)\asymp 1$. By the definition of
$\Lz_{n,p}(x)$, Lemma 4.3,  \eqref{4.8}, and Lemma 2.1, we obtain
that
\begin{align*}
  &\qquad \Lambda_{n,p}(x)\ll \|P_n\|_{p,w}^p\asymp\|P_n\|_{p,w_n}^p\asymp\|P_x\|_{p,w_n}^p
 \\& =\int_{\Bbb B^d}\frac{1}{(1+nd(x,y))^{k}}w_n(y)dy\asymp \int_{\Bbb B^d}\frac{1}{(1+nd(x,y))^{k}} \frac {w({\bf B}(y,\frac{1}{n}))}{ n^{-d}\mathcal{W}_{1/2}(n;y)}dy
  \\&\ll w({\bf B}(x,\frac{1}{n})) \int_{\Bbb B^d}\frac{n^{d}}{(1+nd(x,y))^{k-s_w}\mathcal{W}_{1/2}(n;y)} dy \ll w({\bf B}(x,\frac{1}{n})).
\end{align*}

We now turn to show the lower bound of $\Lambda_{n,p}(x)$.
Let $P\in\Pi_n^d$ with $P(x)=1$. It follows from
\eqref{4.11} that for any $y\in \mathbf{B}(x,\frac{1}{n})$,
\begin{equation}P_{2s_w/p,n}^*(y)\asymp  P_{2s_w/p,n}^*(x)\geq P(x)=1.\label{4.14}\end{equation} Using Lemma 3.3 we obtain that
\begin{align*}
\Lambda_{n,p}(x)&=\min_{P(x)=1,P\in\Pi_n^d}\|P\|_{p,w}^p\asymp \min_{P(x)=1,P\in\Pi_n^d}\|P_{2s_w/p,n}^*\|_{p,w}^p
\\&\gg \min_{P_{2s_w/p,n}^*(x)\ge 1,P\in\Pi_n^d}\int_{\mathbf{B}(x,\frac{1}{n})} |P_{2s_{w}/p,n}^*(y)|^p
w(y)dy\\ &\gg \int_{\mathbf{B}(x,\frac{1}{n})}w(y)dy
=w(\mathbf{B}(x,\frac{1}{n})).
\end{align*}
This shows the desired lower estimate of  $\Lambda_{n,p}(x)$.

  Theorem 4.1 is proved. $\hfill\Box$

\section {Proofs of Theorems 1.1 and  1.2}

\

\noindent{\it Proof of Theorem 1.1.}

Let $P(x)\in\Pi_n^d$. By \eqref{2.3}, the H\"older inequality, and Corollary 3.8, we have for $1\le p<\infty,\ 1/p+1/p'=1$,
\begin{align}
|\partial_i P(x)|&\leq \int_{\mathbb{B}^{d}}|P(y)|
|\partial_{x_i}L_n(w_{1/2};x,y)|
dy\notag\\&\leq\Big(\int_{\mathbb{B}^{d}}|P(y)|^p|\partial_{x_i}L_n(w_{1/2};x,y)|dy\Big)^{\frac{1}{p}}
 \Big(\int_{\mathbb{B}^{d}}|\partial_{x_i} L_n(w_{1/2};x,y)|dy\Big)^{\frac{1}{p^\prime}}
\notag\\&\ll\Big(\int_{\mathbb{B}^d}|P(y)|^p|\partial_{x_i}L_n(w_{1/2};x,y)|dy\Big)^{\frac{1}{p}}n^\frac{2}{p\prime}.\label{5.1}
\end{align}
By Corollary 4.4,  \eqref{5.1}, Fubini's Theorem, \eqref{3.7},
\eqref{4.9} and \eqref{4.10}, and Lemma 2.1,  we obtain for
$k>s_{w}+d+2$,
\begin{align*}\|\partial_iP\|_{p,w}^p&\ll  \|\partial_iP\|_{p,w_n}^p\ll n^\frac{2p}{p\prime}\int_{\mathbb{B}^d}
\Big(\int_{\mathbb{B}^{d}} |P(y)|^p|\partial_{x_i}
L_n(w_{1/2};x,y)|dy\Big)w_n(x)dx\\ &=
n^\frac{2p}{p\prime}\int_{\mathbb{B}^d}
|P(y)|^p\Big(\int_{\mathbb{B}^{d}} |\partial_{x_i} L_n(w_{1/2};x,y)|
w_n(x)dx\Big) dy
\\&\ll n^{\frac{2p}{p\prime}}\int_{\mathbb{B}^{d}}|P(y)|^p\Big(\int_{\mathbb{B}^d}\frac{n^{d+2}w_n(x)}{\sqrt{\mathcal{W}_{1/2}(n;x)\mathcal{W}_{1/2}(n;y)}(1+nd(x,y))^k}dx\Big)dy
\\&\ll n^{\frac{2p}{p\prime}+2}\int_{\mathbb{B}^{d}}|P(y)|^p
\Big(\int_{\mathbb{B}^d}\frac{n^{d}(1+nd(x,y))^{s_{w}+1}w_n(y)
}{\mathcal{W}_{1/2}(n;x)(1+nd(x,y))^{k-1}} dx\Big)dy
\\&= n^{\frac{2p}{p\prime}+2}\int_{\mathbb{B}^{d}}|P(y)|^p
\Big(\int_{\mathbb{B}^d}\frac{n^{d}}{\mathcal{W}_{1/2}(n;x)(1+nd(x,y))^{k-s_{w}-2}}
dx\Big)w_n(y)dy
\\&\ll\|P\|_{p,w_n}^pn^{2+\frac{2p}{p\prime}}\ll\|P\|_{p,w}^pn^{2+\frac{2p}{p\prime}},
\end{align*} which yields that
$$\|\partial_i P\|_{p,w}\ll n^2\|P\|_{p,w}.$$
It follows that \begin{align*}\|\,|\nabla P|\,\|_{p,w}^p&=
\int_{\mathbb{B}^d}\big(\sum\limits_{i=1}^d|\partial_iP(y)|^2\big)^{\frac{p}{2}}w(y)dy\\
&\asymp
\sum\limits_{i=1}^d\int_{\mathbb{B}^d}|\partial_iP(y)|^pw(y)dy\ll
n^{2p}\|P\|_{p,w}^p.\end{align*}

Now we show that for the Jacobi weight $w_\mu$, the exponent $2$ is sharp. For $1\le i\le d$,
let $f\in \Pi_n^1$, and
$$\bar{P}(x)=\bar P (x_1,\cdots,x_i,\cdots,x_d)=f(x_i)\in
\Pi_n^d.$$
 By \cite[Corollary A.5.3]{DaX} we have
$$\|\partial_i\bar{P}\|_{p,w_\mu}^p=\int_{\mathbb{B}^{d}}|f^\prime(x_i)|^pw_\mu(x)dx=c_{\mu,d}\int_{-1}^{1}|f^\prime(x_i)|^pw_\lambda(x_i)dx_i,$$
and
$$\|\bar{P}\|_{p,w_\mu}^p=\int_{\mathbb{B}^{d}}|f(x_i)|^pw_\mu(x)dx=c_{\mu,d}\int_{-1}^{1}|f(x_i)|^pw_\lambda(x_i)dx_i,$$
where $w_\lz(t)=(1-t^2)^{\lz-1/2}, \ t\in[-1,1]$,
$\lambda=\mu+\frac{d-1}{2}$, and $c_{\mu,d}$ is a positive
constant  depending only on $\mu$ and $d.$

Since $\partial_j\bar{P}=0$, for $j\neq i,\ 1\le
j\le d$, it follows that
$$\frac{\|\,|\nabla\bar{P}|\,\|_{p,w_\mu}}{\|\bar{P}\|_{p,w_\mu}}=\frac{\|\partial_i\bar{P}\|_{p,w_\mu}}{\|\bar{P}\|_{p,w_\mu}}=\frac{\|f^\prime\|_{p,w_\lambda}}{\|f\|_{p,w_\lambda}}.$$
Combining with \eqref{1.4}, we get
$$\sup\limits_{0\neq P\in\Pi_n^d}\frac{\|\,|\nabla P|\,\|_{p,w_\mu}}{\|P\|_{p,w_\mu}}
\gg \sup\limits_{0\neq
f\in\Pi_n^1}\frac{\|f^\prime\|_{p,w_\lambda}}{\|f\|_{p,w_\lambda}}\asymp
n^2.$$

Theorem 1.1 is proved.
$\hfill\Box$

\vskip 3mm
\noindent{\it Proof of Theorem 1.2.}

According to \eqref{2.4}, \eqref{2.5}, and Corollary 4.4, it
suffices to show that for $1\le i\le d$,
\begin{equation}\sum_{j=1}^{N_n^d} \|\partial_i
P_j\|_{2,w_n}^2=\int_{\mathbb{B}^d}\Big(\sum_{j=1}^{N_n^d}|\partial_i
P_j(x)|^2\Big)w_n(x)dx\ll n^{d+3}.\label{5.2}\end{equation}

Choose  $k>s_w+d+1$. It follows from \eqref{2.3}, Remark 3.7, the
H\"{o}lder inequality, and Lemma 2.1 that
\begin{align*}|\partial_i P_j(x)|^2&\leq \Big(\int_{\mathbb{B}^{d}}|P_j(y)| |\partial_{x_i} L_n (w_{1/2};x,y)|dy\Big)^2\\&\ll
 \Big(\int_{\mathbb{B}^{d}}|P_j(y)|\frac{1}{(\sqrt{\mathcal{W}_{1/2}(n;x)})^3}\frac{n^{d+1}}{\sqrt{\mathcal{W}_{1/2}(n;y)}(1+nd(x,y))^k}dy\Big)^2
\\&\ll\frac{n^{d+2}}{(\mathcal{W}_{1/2}(n;x))^3}\int_{\mathbb{B}^{d}}\frac{|P_j(y)|^2}{(1+nd(x,y))^k}dy
\int_{\mathbb{B}^{d}}\frac{n^{d}}{\mathcal{W}_{1/2}(n;y)(1+nd(x,y))^k}dy
\\&\ll\frac{n^{d+2}}{(\mathcal{W}_{1/2}(n;x))^3}\int_{\mathbb{B}^{d}}\frac{|P_j(y)|^2}{(1+nd(x,y))^k}dy.
\end{align*}
By \eqref{4.4}, \eqref{4.8},  and Lemma 2.1,  we get that
\begin{align*}
\sum_{j=1}^{N_n^d}|\partial_iP_j(x)|^2
&\ll\frac{n^{d+2}}{(\mathcal{W}_{1/2}(n;x))^3}\int_{\mathbb{B}^{d}}\frac{\sum\limits_{j=1}^{N_n^d}|P_j(y)|^2}{(1+nd(x,y))^k}dy
\\&\ll\frac{n^{d+2}}{(\mathcal{W}_{1/2}(n;x))^3}\int_{\Bbb B^d}\frac{1}{w({\bf B}(y,\frac1n))(1+nd(x,y))^k}dy
\\&\ll\frac{n^{d+2}}{(\mathcal{W}_{1/2}(n;x))^3}\int_{\Bbb B^d}\frac{1}{n^{-d} w_n(y)\mathcal{W}_{1/2}(n;y)(1+nd(x,y))^k}dy
\\&\ll\frac{n^{d+2}}{(\mathcal{W}_{1/2}(n;x))^3 w_n(x)}\int_{\Bbb B^d}\frac{n^{d}}{\mathcal{W}_{1/2}(n;y)(1+nd(x,y))^{k-s_w-1}}dy
\\&\ll\frac{n^{d+2}}{(\mathcal{W}_{1/2}(n;x))^3w_n(x)}.
\end{align*}
Then, according to \cite[Lemma 4]{WZ} we have
$$
\int_{\mathbb{B}^{d}}\frac{1}{(\mathcal{W}_{1/2}(n;x))^3}dx\asymp n  ,$$
which leads that
\begin{align*}
\int_{\mathbb{B}^d}\Big(\sum_{j=1}^{N_n^d}|\partial_iP_j(x)|^2\Big)w_n(x)dx
&\ll\int_{\mathbb{B}^d}\frac{n^{d+2}}{(\mathcal{W}_{1/2}(n;x))^3}dx\ll
n^{d+3},\end{align*}proving \eqref{5.2}. Theorem 1.2 is proved.
$\hfill\Box$

\vskip 3mm

\noindent{\bf Acknowledgment}  Jiansong Li and Heping Wang  were
supported by the National Natural Science Foundation of China
(Project no. 11671271) and
 the  Natural Science Foundation of Beijing Municipality
 (1172004). Kai Wang was supported by the National Natural Science Foundation of China
(Project no. 11801245) and
 the  Natural Science Foundation of Hebei Province (Project no.
 A2018408044).

\end{document}